\begin{document}

\pagenumbering{arabic}
\newcounter{comp1}

\newtheorem{definition}{Definition}[section]
\newtheorem{proposition}{Proposition}[section]
\newtheorem{example}{Example}[section]
\newtheorem{method}{Method}[section]
\newtheorem{lemma}{Lemma}[section]
\newtheorem{theorem}{Theorem}[section]
\newtheorem{corollary}{Corollary}[section]
\newtheorem{application}{Application}[section]
\newtheorem{assumption}{Assumption}
\newtheorem{algorithm}{Algorithm}[section]
\newtheorem{remark}{Remark}[section]
\newcommand{\fig}[1]{\begin{figure}[hbt]
                  \vspace{1cm}
                  \begin{center}
                  \begin{picture}(15,10)(0,0)
                  \put(0,0){\line(1,0){15}}
                  \put(0,0){\line(0,1){10}}
                  \put(15,0){\line(0,1){10}}
                  \put(0,10){\line(1,0){15}}
                  \end{picture}
                  \end{center}
                  \vspace{.3cm}
                  \caption{#1}
                  \vspace{.5cm}
                  \end{figure}}
\newcommand{\Axk}{A(x^k)}
\newcommand{\Aumb}{\sum_{i=1}^{N}A_{i}u_{i}-b}
\newcommand{\Kk}{K^k}
\newcommand{\Kki}{K_{i}^{k}}
\newcommand{\Aukmb}{\sum_{i=1}^{N}A_{i}u_{i}^{k}-b}
\newcommand{\Au}{\sum_{i=1}^{N}A_{i}u_{i}}
\newcommand{\Aukpmb}{\sum_{i=1}^{N}A_{i}u_{i}^{k+1}-b}
\newcommand{\nab}{\nabla^2 f(x^k)}
\newcommand{\xk}{x^k}
\newcommand{\ubk}{\overline{u}^k}
\newcommand{\uhk}{\hat u^k}
\def\QEDclosed{\mbox{\rule[0pt]{1.3ex}{1.3ex}}} % ����ʵ�ķ�
\def\QEDopen{{\setlength{\fboxsep}{0pt}\setlength{\fboxrule}{0.2pt}\fbox{\rule[0pt]{0pt}{1.3ex}\rule[0pt]{1.3ex}{0pt}}}}
% �������ķ�
\def\QED{\QEDopen} % ѡ��\QEDclosed �õ�ʵ��
\def\proof{\par\noindent{\em Proof.}}
\def\endproof{\hfill $\Box$ \vskip 0.4cm}
\newcommand{\RR}{\mathbf R}

\title {\bf
Linear Convergence of Variable Bregman Stochastic Coordinate Descent Method for Nonsmooth Nonconvex Optimization by Level-set Variational Analysis}
\author{Lei Zhao\thanks{Antai College of Economics and Management and Sino-US Global Logistics
Institute, Shanghai Jiao Tong University, Shanghai, China({\tt l.zhao@sjtu.edu.cn})}, Daoli Zhu\thanks {Antai College of Economics and Management and Sino-US Global Logistics
Institute, Shanghai Jiao Tong University, Shanghai, China({\tt
dlzhu@sjtu.edu.cn})}}
\footnotetext[1]{Acknowledgments: this research was supported by NSFC:71471112 and NSFC:71871140}

\maketitle

\begin{abstract}
Large-scale nonconvex and nonsmooth problems have attracted considerable attention in the fields of compress sensing, big data optimization and machine learning. Exploring effective methods is still the main challenge of today's research. Stochastic coordinate descent type methods have been widely used to solve large-scale optimization problems. In this paper, we derive the convergence of variable Bregman stochastic coordinate descent (VBSCD) method for a broad class of nonsmooth and nonconvex optimization problems, i.e., any accumulation of the sequence generated by VBSCD is almost surely a critical point. Moreover, we develop a new variational approach on level sets that aim towards the convergence rate analysis. If the level-set subdifferential error bound holds, we derive a linear rate of convergence for the expected values of the objective function and expected values of random variables generated by VBSCD.

\vspace{1cm}
\noindent {\bf Keywords:} Level-set subdifferential error bound, Variable Bregman stochastic coordinate descent method, Linear convergence, Variational approach, Nonsmooth nonconvex optimization, Level-set based error bounds

%\vspace{0.5cm}
%
%\noindent {\bf Mathematics Subject Classification:}

\end{abstract}
%La m\'{e}thode du probl\`{e}me auxiliaire permet d'obtenir la solution
%de probl\`{e}mes de minimisation par la r\'{e}solution d'une suite
%de probl\`{e}mes plus simples.
%Nous g\'{e}n\'{e}ralisons cette m\'{e}thode en la couplant
%avec une recherche unidimensionnelle (``linesearch'').
%Nous proposons un algorithme fondamental et \'{e}tudions
%ses propri\'{e}t\'{e}s de convergence
%pour la minimisation de fonctionnelles pseudoconvexes
%en dimension infinie. Nous introduisons \'{e}galement la m\'{e}thode
%du probl\`{e}me auxiliaire partiel de descente qui ne
%lin\'{e}arise que l'objectif et n'introduit qu'un
%sous-ensemble des variables dans le terme auxiliaire.
\normalsize
%\newpage
\vspace{1cm}
\section{Introduction}
\indent This paper considers the following nonconvex and nonsmooth optimization problem:
\begin{equation}\label{Prob:general-function}
\mbox{{\rm(P)}}\qquad\min_{x\in\RR^n}\qquad F(x)=f(x)+g(x)=f(x)+\sum_{i=1}^{N}g_i(x_i)
\end{equation}
where $f:\RR^n\rightarrow(-\infty,\infty]$ is a $\mathcal{C}^1$-smooth function (may be nonconvex) and $g:~\RR^n\rightarrow(-\infty, \infty]$ is a continuous semi-convex function. Moreover, $g_i:~\RR^{n_i}\rightarrow(-\infty, \infty]$, $x_i\in\RR^{n_i}$ and $\sum\limits_{i=1}\limits^{N}n_i=n$.
\subsection{Notations and assumptions}
\indent Throughout this paper, $\langle\cdot,\cdot\rangle$ and $\|\cdot\|$ denote the Euclidean scalar product of $\RR^n$ and its corresponding norm respectively.\\
\indent Let $\mathbf{C}$ be a subset of $\RR^n$ and $x$ be any point in $\RR^n$. Define $dist(x,\mathbf{C})=\inf\{\|x-z\|:z\in\mathbf{C}\}$. When $\mathbf{C}=\emptyset$, we set $dist(x,\mathbf{C})=\infty$.\\
\indent Moreover, we use $\mathcal{F}(\RR^n)$ to denote the set of $\mathcal{C}^1$-smooth functions from $\RR^n$ to $(-\infty,+\infty]$; $\Gamma(\RR^n)$ is the set of continuous functions from $\RR^n$ to $(-\infty,+\infty]$.\\
\indent Given an $\overline{x}\in \mathbf{dom}~F$, let $\overline{F}=F(\overline x)$, set $[F\leq\overline{F}]=\{x\in\RR^n|F(x)\leq F(\overline{x})\}$ and $[F>\overline{F}]=\{x\in\RR^n|F(x)>F(\overline{x})\}$.\\
\indent Additionally, the subdifferential calculus $\partial$ that we will use throughout the paper is limiting-subdifferential, which is standard in variational analysis~\cite{Rockafellar2009}.\\
\indent Throughout the remainder of this paper, we make the following assumption on $f$ and $g$.
%%%%ASSUMPTION_ONE%%%%%%%%
%%%%%%%%%%%%%%%%%%%%%%
\begin{assumption}\label{assump1}
\begin{itemize}
\item[{\rm(i)}] $f\in\mathcal{F}(\RR^n)$ is a nonconvex differentiable function with $\mathbf{dom}~f$ convex. Its gradient $\nabla f$ is $L-$Lipschitz continuous on $\mathbf{dom}~f$ such that
\begin{equation*}
\frac{L}{2}\|y-x\|^2+\langle\nabla f(x),y-x\rangle\geq f(y)-f(x)~~\hspace{4mm}\forall x,y\in \mathbf{dom}~f.
\end{equation*}
\item[{\rm(ii)}] $g\in\Gamma(\RR^n)$ and $\mathbf{dom}~g$ is a convex set. Moreover, $g$ is semi-convex on $\mathbf{dom}~g$ with modulus $\rho>0$, one has
\begin{equation}\label{eq:semi-convex}
g(y)\geq g(x)+\langle\xi,y-x\rangle-\frac{\rho}{2}\|x-y\|^2, \forall\xi\in\partial g(x),~x,y\in \mathbf{dom}~g.
\end{equation}
\item[{\rm(iii)}] $F$ is level-bounded i.e., the set $\{x\in\RR^n|F(x)\leq r\}$ is bounded (possibly empty) for every $r\in\RR$.
%\item[{\rm(iv)}]
\end{itemize}
\end{assumption}
\indent From {\rm{(i)} and {\rm{(ii)}}, $\mathbf{dom}~F$ is a convex set. In addition,
as a consequence of {\rm{(iii)},  the optimal value $F^*$ of (P) is finite and the optimal solution set $\mathbf{X}^*$ of (P) is non-empty. Moreover, the set of all critical points of $F$ is denoted by $\overline{\mathbf{X}}=\{x|0\in\nabla f(x)+\partial g(x)\}$.\\
\indent Here we note that the well known SCAD penalty~\cite{SCAD} and MCP penalty~\cite{MCP} are both semi-convex.\\
\indent To construct an algorithm for (P), we introduce Bregman distance function $D$ and parameter $\epsilon$.\\
\indent Let $K:\RR^n\rightarrow(-\infty,\infty)$ be a twice differentiable strongly convex function. The Bregman distance function associated with $K$ is defined by
$$D(x,y)=K(y)-[K(x)+\langle\nabla K(x),y-x\rangle].$$
Then we have $\nabla_yD(x,y)=\nabla K(y)-\nabla K(x)$, $\nabla_xD(x,y)=\langle\nabla^2K(x),x-y\rangle$ and $\nabla_yD(x,x)=0$. We make the following standing assumption on function $K$ and parameter $\epsilon$.
\begin{assumption}\label{assump2}
\begin{itemize}
\item[{\rm(i)}] $K$ is strongly convex with $m$ and with its gradient $\nabla K$ being $M$-Lipschitz.
\item[{\rm(ii)}] The parameter $\epsilon$ satisfies: $0<\underline{\epsilon}\leq\epsilon\leq\overline{\epsilon}<\min\{\frac{m}{L},\frac{m}{\rho}\}$.
\end{itemize}
\end{assumption}
\indent Under this assumption, we have Bregman distance function $D$ satisfies:
\begin{eqnarray}
\frac{m}{2}\|x-y\|^2\leq D(x,y)\leq\frac{M}{2}\|x-y\|^2.
\end{eqnarray}
\indent Now we are ready to introduce the variable Bregman stochastic coordinate descent method.
\subsection{Variable Bregman Stochastic Coordinate Descent method}\label{sec:Variable Bregman}
\indent In this subsection we introduce the Variable Bregman Stochastic Coordinate Descent (VBSCD) method. First, recall that a variable Bregman distance-like function has the form
\begin{equation}
D^k(x,y)=K^k(y)-[K^k(x)+\langle\nabla K^k(x),y-x\rangle].
\end{equation}
\indent We propose to solve (P) by generating a sequence $\{x^k\}$ using the following VBSCD method:\\
\noindent\rule[0.25\baselineskip]{\textwidth}{1.5pt}
{\bf Variable Bregman Stochastic Coordinate Descent method (VBSCD)}\\
\noindent\rule[0.25\baselineskip]{\textwidth}{0.5pt}
{Initialize} $x^0\in\RR^n$\\
 \textbf{for} $k = 0,1,\cdots $, \textbf{do}
\begin{eqnarray}\label{APk}
&&\mbox{Choose $i(k)$ from $\{1,2,...,N\}$ with equal probability}\nonumber\\
&&\mbox{(AP$^{i(k)}$)}\; x^{k+1}\in\arg\min_{x\in\RR^n}\bigg{\{}\langle\nabla_{i(k)} f(x^k),(x-x^k)_{i(k)}\rangle+g_{i(k)}(x_{i(k)})+\frac{1}{\epsilon^k}D^k(x^k,x)\bigg{\}}.\nonumber\\
\end{eqnarray}
\textbf{end for}\\
\noindent\rule[0.25\baselineskip]{\textwidth}{1.5pt}
Here we note that if $K^k(\cdot)=\frac{1}{2}\|\cdot\|^2$, VBSCD refines the classical random coordinate descent scheme. (see~\cite{LuXiao2015,Nesterov2012,Necoara2015,RT2014})
\subsection{Error bounds and their relationship}\label{subsec:eb}
\indent In this subsection, let $\overline{x}\in\overline{\mathbf{X}}$, we introduce four types of error bounds which are always used in the convergence rate analysis of algorithms.\\
\indent For given positive numbers $\eta$ and $\nu$, let $\mathbb{B}(\overline{x};\eta)=\{x\in\RR^n~|~\|x-\overline{x}\|\leq\eta\}$ and
$$\mathfrak{B}(\overline{x};\eta,\nu)=\mathbb{B}(\overline{x};\eta)\cap \{x\in \RR^n~|~\overline{F}<F(x)<\overline{F}+\nu\},$$
\indent Then we will introduce the concepts of the level-set subdifferential error bound (LS-EB).
\begin{definition}[Level-set subdifferential error bound (LS-EB)]\label{defi:LSEB}
The proper lower semicontinuous function $F$  is said to satisfy the level-set subdifferential error bound condition at $\overline{x}$ if there exist $\eta>0$, $\nu>0$, and $c_0>0$ such that the following inequality holds:
$$dist\left(x,[F\leq\overline{F}]\right)\leq c_0dist\left(0,\partial F(x)\right)~~\forall
x\in\mathfrak{B}(\overline{x};\eta,\nu).$$
\end{definition}
\indent For a given Bregman Proximal Mapping, we can introduce Bregman proximal error bound. The definition of Bregman Proximal Mapping is as follows.\\
{\bf Bregman Proximal Mapping}\\
Bregman proximal mapping $T_{D,\epsilon}$ is defined by
\begin{equation}\label{defi:Tk}
T_{D,\epsilon}(x)=\arg\min_{y\in\RR^n}\langle\nabla f(x),y-x\rangle+g(y)+\frac{1}{\epsilon}D(x,y),\quad\forall x\in\RR^n.
\end{equation}
\indent Bregman proximal error bound (BP-EB) is defined as follows.
\begin{definition}[Bregman proximal error bound (BP-EB)]\label{defi:BPEB}
Given a Bregman function $D$ along with $\epsilon>0$, we say that the Bregman proximal error bound holds at $\overline{x}\in\overline{\mathbf{X}}$, if there exist $\eta>0$, $\nu>0$ and $c_1>0$ such that
$$dist\left(x,\overline{\mathbf{X}}\right)\leq c_1dist\left(x,T_{D,\epsilon}(x)\right)~~\forall
x\in\mathfrak{B}(\overline{x};\eta,\nu).$$
\end{definition}
\indent The well known Kurdyka-{\L}ojasiewicz is defined as follows. (see~\cite{Attouch13, KL19})
\begin{definition}[Kurdyka-{\L}ojasiewicz property (KL)]\label{defi:KL}
The proper lower semi continuous function $F$ is said to satisfy the Kurdyka-{\L}ojasiewicz (KL) property at $\overline{x}$, if there exist $\eta>0$, $\nu>0$ and $c_2>0$ such that the following inequality holds:
$$dist\left(0,\partial F(x)\right)\geq c_2[F(x)-\overline{F}]^{\frac{1}{2}}~~\forall
x\in\mathfrak{B}(\overline{x};\eta,\nu).$$
\end{definition}
\indent By introducing $Prox_g^{\epsilon}\left(x-\epsilon\nabla f(x)\right)=\arg\min\limits_{y\in\RR^n}\langle\nabla f(x),y-x\rangle+g(y)+\frac{1}{2\epsilon}\|x-y\|^2$, we have Luo and Tseng's error bound (LT-EB) as follows. (see~\cite{LuoTseng92,Necoara2015,Tseng2009})
\begin{definition}[Luo and Tseng's error bound (LT-EB)]\label{defi:LPEB}
We say the Luo-Tseng error bound holds if any $\varepsilon_1\geq\inf_{x\in\RR^n} F(x)$, there exists constant $c_3>0$ and $\varepsilon_2>0$ such that
\begin{equation}
dist(x,\overline{\mathbf{X}})\leq c_3\|x-Prox_g^{\epsilon}\left(x-\epsilon\nabla f(x)\right)\|
\end{equation}
whenever $F(x)\leq\varepsilon_1$, $\|x-Prox_g^{\epsilon}\left(x-\epsilon\nabla f(x)\right)\|\leq\varepsilon_2$.
\end{definition}
Additionally, we introduce two standard assumptions, which are always used together with above error bounds.
\begin{assumption}[~\cite{LuoTseng92,Necoara2015,Tseng2009}]\label{assumpH}
There exists $\rho>0$ such that $\|x-y\|\geq\rho$ whenever $x,y\in\overline{\mathbf{X}}$ with $F(x)\neq F(y)$.
\end{assumption}
\begin{assumption}[Growth condition {~\cite{Attouch13}}]\label{assump3}
For any $\rho>0$ there exist $0<\sigma<\rho$, $\delta>0$ and $\mathfrak{a}>0$ such that
\begin{eqnarray}
\begin{array}{c}
x\in\mathbb{B}(\overline{x},\sigma), F(x)<\overline{F}+\delta\\
y\notin\mathbb{B}(\overline{x},\rho)
\end{array}
\bigg{\}}\Rightarrow F(x)<F(y)+\mathfrak{a}\|y-x\|^2.
\end{eqnarray}
\end{assumption}
\indent Here we note that, according to Zhu and Deng~\cite{ZhuDeng2019}, the combination of BP-EB, semi-convex $g$ and Assumption~\ref{assumpH} implies LS-EB. If $g$ is convex, LT-EB implies BP-EB. Additionally, if $g$ is semi-convex, KL and LS-EB are equivalent. Moreover, we observe that LT-EB is a global version error bound (EB). BP-EB, KL and LS-EB are local-version error bounds. Therefore, LS-EB is the weakest EB-type condition. (see Figure~\ref{fig:1})
\begin{figure}
\centering
\begin{center}
\scriptsize
		\tikzstyle{format}=[rectangle,draw,thin,fill=white]
		\tikzstyle{test}=[diamond,aspect=2,draw,thin]
		\tikzstyle{point}=[coordinate,on grid,]
\begin{tikzpicture}
[%��ͷ��ģʽ��latex Ĭ��ģʽ
>=latex,
%�����������
node distance=5mm,
% hv path ��ʾһ�����㵽��һ����������ˮƽ�ٴ�ֱ��vh �෴��skip loop ��ʾ
%��ֱ-ˮƽ-��ֱ vskip loop ��ʾˮƽ-��ֱ-ˮƽ
 ract/.style={draw=blue!50, fill=blue!5,rectangle,minimum size=6mm, very thick, font=\itshape, align=center},
 racc/.style={rectangle, align=center},
 ractm/.style={draw=red!100, fill=red!5,rectangle,minimum size=6mm, very thick, font=\itshape, align=center},
 cirl/.style={draw, fill=yellow!20,circle,   minimum size=6mm, very thick, font=\itshape, align=center},
 raco/.style={draw=green!500, fill=green!5,rectangle,rounded corners=2mm,  minimum size=6mm, very thick, font=\itshape, align=center},
 hv path/.style={to path={-| (\tikztotarget)}},
 vh path/.style={to path={|- (\tikztotarget)}},
 skip loop/.style={to path={-- ++(0,#1) -| (\tikztotarget)}},
 vskip loop/.style={to path={-- ++(#1,0) |- (\tikztotarget)}}]
        \node (a) [ractm]{\baselineskip=3pt\small {\bf level-set subdifferential error bound (LS-EB)}\\ \baselineskip=3pt\footnotesize$dist\left(x,[F\leq\overline{F}]\right)\leq c_0{dist}\left(0,\partial F(x)\right)$};
        \node (b1) [ract, above = of a, xshift=-100, yshift=13]{\baselineskip=3pt\small {\bf Bregman proximal error bound (BP-EB)}\\
        \baselineskip=3pt\footnotesize$dist\left(x,\overline{\mathbf{X}}_P\right)\leq c_1dist\left(x,T_{D,\epsilon}(x)\right)$};
        \node (b3) [ract, above = of a, xshift=100, yshift=13]{\baselineskip=3pt\small {\bf Strongly convex (SC)}};
        \node (bb) [racc, below= of b1, xshift=-3, yshift=13]{\baselineskip=3pt\footnotesize{\bf Assumption~\ref{assumpH}}};
        \node (bbb) [racc, below= of bb, yshift=14]{\baselineskip=3pt\footnotesize $g$ is semi-convex};
        \node (b2) [ract, below = of a, yshift=-13]{\baselineskip=3pt\small {\bf KL property (KL)}\\ \baselineskip=3pt\footnotesize $dist\left(0,\partial F(x)\right)\geq c_2\left(F(x)-\overline{F}\right)^{1/2}$};
        \node (bbbc) [racc, above= of b2, xshift=-32, yshift=-8]{\baselineskip=3pt\footnotesize $g$ is semi-convex};
        \node (c) [raco, above = of b1, yshift=13]{\baselineskip=3pt\small {\bf Luo and Tseng's error bound (LT-EB)}\\
        \baselineskip=3pt\footnotesize$dist(x,\overline{\mathbf{X}}_P)\leq c_3\|x-Prox_g^{\epsilon}\left(x-\epsilon\nabla f(x)\right)\|$};
        \node (bbc) [racc, below= of c, xshift=-30, yshift=8]{\baselineskip=3pt\footnotesize$g$ is convex};
        \path %(a) edge[->] (b)
               (c) edge[->] (b1)
               (c) edge[->] (b1)
              (b1) edge[->] (a)
              (b1) edge[->] (a)
              (b2) edge[->] (a)
              (b3) edge[->] (a)
               (a) edge[->] (b2);
\end{tikzpicture}
\caption{The relationship among the notions of the LS-EB, BP-EB, KL, LT-EB and Strongly convex (SC)}\label{fig:1}
\end{center}
\end{figure}
\subsection{Related works}
\indent In this subsection, we compare the VBSCD method and existing theoretical results of the stochastic coordinate descent-type methods on linear convergence. (see Table~\ref{table1}) The difference of this paper compared to existing research is our analysis of the convergence and convergence rate using a local version error bound condition (level-set subdifferential error bound). In other words, the error bound condition holds in the neighborhood of a given point. We analyze three cases in this paper: the critical point, local minimum and global minimum.
\begin{table}[htbp]
\centering  % 显示位置为中间
\caption{Existence results of Variable Bregman Stochastic Coordinate Descent method (VBSCD)}  % 表格标题
\label{table1}
\begin{tabular}{|c|c|c|}
\hline
\multirow{2}{2cm}{{\bf Paper}}&{\bf Problem property}&\multirow{2}{2cm}{{\bf Theoretical Results}}\\
\cmidrule{2-2}
&{\bf Algorithm}&\\
\hline  % 表格的横线
\multirow{2}{2cm}{Nesterov, 2012~\cite{Nesterov2012}}&\begin{tabular}{c}$f\in\mathcal{F}(\RR^n)$ convex;\\$g=0$;\\$F$ is strongly convex.\end{tabular}&\multirow{2}{2cm}{$\{\mathbb{E}_{\xi_{k-1}}F(x^k)\}$ $Q-$linear}\\
\cmidrule{2-2}
&$K^k(\cdot)=\frac{1}{2}\|\cdot\|^2$&\\
\hline
\multirow{2}{2cm}{Lu \&Xiao, 2015~\cite{LuXiao2015}}&\begin{tabular}{c}$f\in\mathcal{F}(\RR^n)$ convex;\\$g\in\Gamma(\RR^n)$ convex;\\$F$ is strongly convex.\end{tabular}&\multirow{2}{2cm}{$\{\mathbb{E}_{\xi_{k-1}}F(x^k)\}$ $Q-$linear}\\
\cmidrule{2-2}
&$K^k(\cdot)=\frac{1}{2}\|\cdot\|^2$&\\
\hline
\multirow{2}{2cm}{Patrascu \&Necoara, 2015~\cite{Necoara2015}}&\begin{tabular}{c}$f\in\mathcal{F}(\RR^n)$;\\$g\in\Gamma(\RR^n)$ convex;\\LT-EB+Assumption~\ref{assumpH}.\end{tabular}&\multirow{2}{2cm}{$\{\mathbb{E}_{\xi_{k-1}}F(x^k)\}$ $Q-$linear}\\
\cmidrule{2-2}
&$K^k(\cdot)=\frac{1}{2}\|\cdot\|^2$&\\
\hline
\multirow{4}{2cm}{This paper}&\begin{tabular}{c}$f\in\mathcal{F}(\RR^n)$;\\ $g\in\Gamma(\RR^n)$ semi-convex;\end{tabular}&\begin{tabular}{l}{\bf $\overline{x}$ is a critical point}\\+Assumption~\ref{assumpH}\\
$\{\mathbb{E}_{\xi_{k-1}}F(x^k)\}$ $Q-$linear;\\ $\{\mathbb{E}_{\xi_{k-1}}x^k\}$ $R-$linear\end{tabular}\\
\cmidrule{3-3}
&LS-EB on $\mathfrak{B}(\overline{x};\eta,\nu)$.&\begin{tabular}{l}{\bf $\overline{x}$ is a local minimum}\\+Assumption~\ref{assump3}\end{tabular}\\
\cmidrule{2-2}
&\multirow{2}{2cm}{$K^k(\cdot)$ satisfy Assumption~\ref{assump2}.}&\begin{tabular}{l}$\{\mathbb{E}_{\xi_{k-1}}F(x^k)\}$ $Q-$linear;\\ $\{\mathbb{E}_{\xi_{k-1}}x^k\}$ $R-$linear\end{tabular}\\
\cmidrule{3-3}
&&\begin{tabular}{l}{\bf $\overline{x}$ is a global minimum}\\$\{\mathbb{E}_{\xi_{k-1}}F(x^k)\}$ $Q-$linear;\\ $\{\mathbb{E}_{\xi_{k-1}}x^k\}$ $R-$linear\end{tabular}\\
\hline
\end{tabular}
\end{table}
\subsection{Main contributions and outline of this paper}
In this paper, we propose a variable Bregman stochastic coordinate descent (VBSCD) method based on the Variable Bregman Proximal Gradient (VBPG) method (Zhu and Deng, 2019~\cite{ZhuDeng2019}, Cohen 1980~\cite{Cohen80}) for (P). In this method, we randomly update a block of variables based on the uniform distribution in each iteration. The sequence generated by our algorithm is proven to converge to a critical point of problem (P). Moreover, we develop a new variational approach on level sets that aim towards the convergence rate analysis. The $\{\mathbb{E}_{\xi_{k-1}}F(x^k)\}$ $Q-$linear convergence rate and $\{\mathbb{E}_{\xi_{k-1}}x^k\}$ $R-$linear convergence rate are analyzed in this paper.\\
The remainder of this paper is organized as follows. Section~\ref{sec:pre} is devoted to the properties of the Bregman-type mappings and functions. In Section~\ref{sec:convergence} we establish the convergence of VBSCD. In Section~\ref{sec:convergence rate}, the linear convergence rates of three cases are analyzed.

\section{Basic properties of Bregman type mappings and functions}\label{sec:pre}
The analysis of convergence and rate of convergence for the VBSCD method, essentially relies on Bregman type mappings and functions. Given a Bregman function $D$ and a positive $\epsilon$, the following mappings and functions will play a key role for the analysis of convergence and rate of convergence for the VBSCD method.\\
{\bf Bregman Proximal Envelope Function (BP Envelope Function)}\\
BP envelope function $E_{D,\epsilon}$ is defined by
\begin{equation}
E_{D,\epsilon}(x)=\min_{y\in\RR^n}\{f(x)+\langle\nabla f(x),y-x\rangle+g(y)+\frac{1}{\epsilon}D(x,y)\},\quad\forall x\in\RR^n.
\end{equation}
{\bf Coordinate Bregman Proximal Mapping}\\
Coordinate Bregman proximal mapping $\hat{T}_{i,D,\epsilon}(x)$ is defined by
\begin{eqnarray}\label{defi:Tki}
&&\mbox{For any given index $i\in\{1,2,...,N\}$}\nonumber\\
&&\hat{T}_{i,D,\epsilon}(x)=\arg\min_{y\in\RR^n}\langle\nabla_i f(x),(y-x)_i\rangle+g_i(y_i)+\frac{1}{\epsilon}D(x,y),\quad\forall x\in\RR^n,
\end{eqnarray}
which can be viewed as the optimizer of optimization problem (AP$^{i(k)}$), where $i$ is replaced by random variable $i(k)$ and $x$ is replaced by $x^k$. In another words, if index $i$ is random variable, then $\hat{T}_{i,D,\epsilon}(x)$ is a random output.\\
\begin{lemma} Let a Bregman function $D$ and parameter $\epsilon$ be given. Let index $i$ is chosen from $\{1,2,...,N\}$ with equal probability. Suppose that Assumptions~\ref{assump1} and~\ref{assump2} hold, then for any $x\in\RR^n$, the following assertions are true.
{\rm
\begin{itemize}
\item[(i)] $\hat{T}_{i,D,\epsilon}(x)$ (see~\eqref{defi:Tki}) and $T_{D,\epsilon}(x)$ (see~\eqref{defi:Tk}) are single value.
\item[(ii)] For any given $\RR^n\rightarrow\RR$ function $\psi$, we have
\begin{equation}\label{eq:observation-0} \mathbb{E}_i\psi\big{(}\hat{T}_{i,D,\epsilon}(x)\big{)}=\frac{1}{N}\sum_{i=1}^{N}\psi\big{(}\hat{T}_{i,D,\epsilon}(x)\big{)}. \end{equation}
\item[(iii)] For $j=1,...,N$, for any mapping $\varphi$, we have
\begin{equation}\label{eq:observation-1} \mathbb{E}_i\varphi\bigg{(}\big{(}\hat{T}_{i,D,\epsilon}(x)\big{)}_j\bigg{)}=\frac{1}{N}\varphi\bigg{(}\big{(}T_{D,\epsilon}(x)\big{)}_j\bigg{)}+\left(1-\frac{1}{N}\right)\varphi(x_j). \end{equation}
Specifically, it guarantees that
\begin{equation}\label{eq:observation-2}
\mathbb{E}_i\hat{T}_{i,D,\epsilon}(x)=\frac{1}{N}T_{D,\epsilon}(x)+\left(1-\frac{1}{N}\right)x.
\end{equation}
\end{itemize}
}
\end{lemma}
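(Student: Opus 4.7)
The plan is to dispatch the three assertions in order, with the bulk of the work concentrated in (i) (uniqueness) and (iii) (the separability identity).

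For (i), I would show that the objective
$$\phi(y) := \langle\nabla f(x),y-x\rangle+g(y)+\tfrac{1}{\epsilon}D(x,y)$$
defining $T_{D,\epsilon}(x)$ is proper, lower semicontinuous, coercive, and strongly convex. The linear term is convex; by Assumption~\ref{assump1}(ii), $g$ is $\rho$-semi-convex, i.e.\ $g+(\rho/2)\|\cdot\|^2$ is convex; and by Assumption~\ref{assump2}(i) together with $\nabla^2_{yy}D(x,y)=\nabla^2 K(y)\succeq mI$, the map $\tfrac{1}{\epsilon}D(x,\cdot)$ is $(m/\epsilon)$-strongly convex. Assumption~\ref{assump2}(ii) guarantees $m/\epsilon>\rho$, so $\phi$ is strongly convex with modulus $m/\epsilon-\rho>0$. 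Coercivity follows from the lower bound $D(x,y)\geq(m/2)\|y-x\|^2$ dominating both the linear gradient term and the quadratic lower bound on $g$ obtained from semi-convexity. Existence and uniqueness of the minimizer are then immediate, so $T_{D,\epsilon}(x)$ is single-valued. The same argument applies verbatim to $\hat{T}_{i,D,\epsilon}(x)$ after replacing $g$ by $g_i$ and $\nabla f(x)$ by $\nabla_i f(x)$.

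Assertion (ii) is a direct restatement of the definition of the expectation $\mathbb{E}_i$ when $i$ is uniformly distributed on $\{1,\ldots,N\}$, and requires no further argument.

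For (iii) and the corollary~\eqref{eq:observation-2}, the decisive ingredient is the block-separability of $K$ across the partition $y=(y_1,\ldots,y_N)$ tacitly built into the coordinate-descent setting, which yields $D(x,y)=\sum_{l=1}^N D^{(l)}(x_l,y_l)$ for block Bregman pieces $D^{(l)}$. Under this decomposition, the objective in the definition of $\hat{T}_{i,D,\epsilon}(x)$ splits into one piece depending only on $y_i$ and independent pieces $\tfrac{1}{\epsilon}D^{(l)}(x_l,y_l)$ for $l\neq i$, each of which is uniquely minimized at $y_l=x_l$. Hence $(\hat{T}_{i,D,\epsilon}(x))_l=x_l$ for $l\neq i$, while the analogous decoupling of $T_{D,\epsilon}(x)$ gives $(\hat{T}_{i,D,\epsilon}(x))_i=(T_{D,\epsilon}(x))_i$. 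Plugging these two identities into~\eqref{eq:observation-0} with $\psi(\cdot)=\varphi((\cdot)_j)$ and separating the single index $i=j$ from the remaining $N-1$ indices $i\neq j$ produces exactly~\eqref{eq:observation-1}; taking $\varphi$ to be the identity on $\RR^{n_j}$ for each $j$ and assembling the blocks recovers~\eqref{eq:observation-2}.

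The only genuinely delicate point is the implicit block-separability of $K$ used in (iii): Assumption~\ref{assump2} does not state it explicitly, but it is indispensable for a coordinate step to leave the non-selected blocks of $\hat{T}_{i,D,\epsilon}(x)$ at $x_l$, which is precisely the content of the identity. I would therefore make this separability assumption explicit before carrying out the argument above, or equivalently redefine $\hat{T}_{i,D,\epsilon}(x)$ as a minimization restricted to the $i$-th block with the other blocks frozen at $x_l$.
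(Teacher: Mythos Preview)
Your argument is correct and in fact considerably more thorough than the paper's own proof, which disposes of (i) by citing Proposition~2.4 of Zhu--Deng~\cite{ZhuDeng2019} and declares (ii) and (iii) ``trivial'' without further comment. The substance of your reasoning for (i)---that the semi-convexity modulus $\rho$ of $g$ is dominated by the strong-convexity modulus $m/\epsilon$ of $\tfrac{1}{\epsilon}D(x,\cdot)$ thanks to Assumption~\ref{assump2}(ii)---is exactly the content of that external reference, so the approaches coincide.

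Your observation about the implicit block-separability of $K$ in (iii) is well taken: the paper never states it, yet the identity $(\hat{T}_{i,D,\epsilon}(x))_l=x_l$ for $l\neq i$ genuinely fails for a general (non-separable) Bregman kernel, since then the term $\tfrac{1}{\epsilon}D(x,y)$ couples all blocks and minimizing over $y\in\RR^n$ need not leave the unselected coordinates fixed. The paper's ``Trivially'' presumes this separability (or, equivalently, the block-restricted interpretation of the coordinate step that you describe), and you are right to make it explicit.
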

\begin{proof}
\begin{itemize}
\item[(i)] Since $g$ is semi-convex with modules $\rho$ in Assumption~\ref{assump1} and $\overline{\epsilon}<\min\{\frac{m}{L},\frac{m}{\rho}\}$ in Assumption~\ref{assump2}, by Proposition 2.4 of Zhu and Deng~\cite{ZhuDeng2019}, we have statement (i) of this lemma.
\item[(ii)] \& (iii) Trivially.
\end{itemize}
\end{proof}
The above mappings and functions enjoy favorable properties, which are summarized in the following propositions.
%%%%%%%%BASIC PROPERTIES OF BREGMAN FUNCTION and EXISTENCE RESULTS%%%%%%%%%%%%%%%%%%%%%%%%%%%%%%%%%%%%%%%%
\begin{proposition}{\bf(Global properties of Bregman type mappings and functions)}\label{prop:Ek} Let a Bregman function $D$ and parameter $\epsilon$ be given. Let index $i$ is chosen from $\{1,2,...,N\}$ with equal probability. Suppose that Assumptions~\ref{assump1} and \ref{assump2} hold, then for any $x\in\RR^n$,
{\rm
\begin{itemize}
\item[(i)] $F\left(\hat{T}_{i,D,\epsilon}(x)\right)-F(x)\leq-a\|x-\hat{T}_{i,D,\epsilon}(x)\|^2$ and $a=\frac{m-\overline{\epsilon}L}{2\overline{\epsilon}}$;
\item[(ii)] $N\mathbb{E}_{i}F\big{(}\hat{T}_{i,D,\epsilon}(x)\big{)}-(N-1)F(x)\leq E_{D,\epsilon}(x)-\frac{N}{2}(\frac{m}{\overline{\epsilon}}-L)\mathbb{E}_{i}\|x-\hat{T}_{i,D,\epsilon}(x)\|^2$.
\end{itemize}
}
\end{proposition}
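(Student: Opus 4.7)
The plan is to derive both inequalities from the optimality of $\hat{T}_{i,D,\epsilon}(x)$ in~\eqref{defi:Tki}, the standard descent inequality for $f$ from Assumption~\ref{assump1}(i), and the quadratic lower bound $D(x,y)\geq \frac{m}{2}\|x-y\|^2$ from Assumption~\ref{assump2}(i). The structural fact behind~\eqref{eq:observation-1}, that $\hat{T}_{i,D,\epsilon}(x)$ agrees with $x$ outside block $i$ and with $T_{D,\epsilon}(x)$ on block $i$, is what glues the coordinate steps to the full Bregman proximal mapping and will be used repeatedly.

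For (i), I set $y^i=\hat{T}_{i,D,\epsilon}(x)$ and compare the value of the objective in~\eqref{defi:Tki} at $y^i$ against the admissible point $y=x$. Since $D(x,x)=0$, this yields the key optimality inequality
\begin{equation*}
\langle\nabla_i f(x),(y^i-x)_i\rangle + g_i(y^i_i) - g_i(x_i) + \tfrac{1}{\epsilon}D(x,y^i) \leq 0.
\end{equation*}
Because $y^i-x$ is supported on block $i$, the descent inequality reduces to $f(y^i)-f(x)\leq \langle\nabla_i f(x),(y^i-x)_i\rangle + \frac{L}{2}\|y^i-x\|^2$, and the block structure of $g$ gives $g(y^i)-g(x)=g_i(y^i_i)-g_i(x_i)$. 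Summing these and absorbing the optimality inequality, then bounding $-\tfrac{1}{\epsilon}D(x,y^i)\leq -\tfrac{m}{2\epsilon}\|x-y^i\|^2$ and invoking $\epsilon\leq\overline{\epsilon}$, delivers exactly $F(y^i)-F(x)\leq -\frac{m-\overline{\epsilon}L}{2\overline{\epsilon}}\|x-y^i\|^2$.

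For (ii), the plan is to perform the same construction for every index $i$, but instead of cancelling the linear-plus-nonsmooth terms via the optimality inequality, sum them to reassemble the envelope function $E_{D,\epsilon}(x)$. Using $(y^i)_i=(T_{D,\epsilon}(x))_i$ and $(y^i)_j=x_j$ for $j\neq i$, summation gives
\begin{equation*}
\sum_{i=1}^{N}\bigl[\langle\nabla_i f(x),(y^i-x)_i\rangle + g_i(y^i_i)\bigr] = \langle\nabla f(x), T_{D,\epsilon}(x)-x\rangle + g(T_{D,\epsilon}(x)),
\end{equation*}
which by definition of $E_{D,\epsilon}(x)$ equals $E_{D,\epsilon}(x)-f(x)-\tfrac{1}{\epsilon}D(x,T_{D,\epsilon}(x))$. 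After summing the per-block descent inequality for $f$, subtracting $\sum_i g_i(x_i)=g(x)$, using the block-separability of $D$ to identify $D(x,T_{D,\epsilon}(x))=\sum_i D(x,y^i)$ together with $D(x,y^i)\geq\tfrac{m}{2}\|x-y^i\|^2$, and dividing by $N$ to convert sums over $i$ into $\mathbb{E}_i$ via~\eqref{eq:observation-0}, the inequality of (ii) falls out with the announced coefficient $\frac{N}{2}(\tfrac{m}{\overline{\epsilon}}-L)$.

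The main obstacle is keeping the block-separability bookkeeping straight: both the gradient identity $\sum_i\langle\nabla_i f(x),(y^i-x)_i\rangle=\langle\nabla f(x),T_{D,\epsilon}(x)-x\rangle$ and the additive decomposition $D(x,T_{D,\epsilon}(x))=\sum_i D(x,y^i)$ rely on $(y^i)_j=x_j$ for $j\neq i$, which is exactly what~\eqref{eq:observation-1} encodes. Once that geometric fact is acknowledged, what remains is routine: matching the coefficients $\frac{m-\epsilon L}{2\epsilon}$ and $\frac{m-\overline{\epsilon}L}{2\overline{\epsilon}}$ via Assumption~\ref{assump2}(ii), and pulling out the factor $\frac{N}{2}$ in front of $\mathbb{E}_i\|x-\hat{T}_{i,D,\epsilon}(x)\|^2$.
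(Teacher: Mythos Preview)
Your proof of (i) is essentially identical to the paper's. For (ii), the paper runs the same computation in the opposite direction: it starts from the definition of $E_{D,\epsilon}(x)$, lower-bounds $\tfrac{1}{\epsilon}D(x,T_{D,\epsilon}(x))\geq\tfrac{m}{2\overline{\epsilon}}\|x-T_{D,\epsilon}(x)\|^2$, and then invokes \eqref{eq:observation-1}--\eqref{eq:observation-2} to rewrite $\langle\nabla f(x),T_{D,\epsilon}(x)-x\rangle$, $g(T_{D,\epsilon}(x))$, and $\|x-T_{D,\epsilon}(x)\|^2$ as $N\mathbb{E}_i$ of the corresponding coordinate quantities, after which the descent lemma for $f$ is applied. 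Your route---summing the per-block descent inequalities and reassembling $E_{D,\epsilon}(x)$---is the same argument in reverse order. The one substantive difference is your appeal to ``block-separability of $D$'' to obtain $D(x,T_{D,\epsilon}(x))=\sum_i D(x,y^i)$: separability of $K$ is not an explicit part of Assumption~\ref{assump2}, and the paper avoids invoking it by passing to the (always block-separable) squared Euclidean norm \emph{before} splitting. That said, the identity \eqref{eq:observation-1} that you (and the paper) rely on already forces this separability, so no genuine gap arises; if you want to stay strictly within the stated hypotheses, simply swap the order and use $D(x,T_{D,\epsilon}(x))\geq\tfrac{m}{2}\|x-T_{D,\epsilon}(x)\|^2=\tfrac{m}{2}\sum_i\|x-y^i\|^2$ instead.
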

\begin{proof}
\begin{itemize}
\item[(i)] Since $\hat{T}_{i,D,\epsilon}(x)$ is the optimizer of minimization problem in~\eqref{defi:Tki}, we have that
\begin{eqnarray}
\langle\nabla_i f(x),(\hat{T}_{i,D,\epsilon}(x)-y)_i\rangle+g_i\left((\hat{T}_{i,D,\epsilon}(x))_i\right)-g_i(y_i)\nonumber\\+\frac{1}{\epsilon}D(x,\hat{T}_{i,D,\epsilon}(x))-\frac{1}{\epsilon}D(x,y)\leq0.
\end{eqnarray}
Take $y=x$ in above inequality, by the fact $\langle\nabla_i f(x),(\hat{T}_{i,D,\epsilon}(x)-x)_i\rangle=\langle\nabla f(x),\hat{T}_{i,D,\epsilon}(x)-x\rangle$ and $g_i\left((\hat{T}_{i,D,\epsilon}(x))_i\right)-g_i(x_i)=g\left(\hat{T}_{i,D,\epsilon}(x)\right)-g(x)$ we have
\begin{eqnarray}
\langle\nabla f(x),\hat{T}_{i,D,\epsilon}(x)-x\rangle+g\left(\hat{T}_{i,D,\epsilon}(x)\right)-g(x)+\frac{1}{\epsilon}D(x,\hat{T}_{i,D,\epsilon}(x))\leq0.
\end{eqnarray}
By the gradient Lipschitz of $f$ and $\frac{1}{\epsilon}D(x,\hat{T}_{i,D,\epsilon}(x))\geq\frac{m}{2\overline{\epsilon}}\|x-\hat{T}_{i,D,\epsilon}(x)\|^2$, we have that
\begin{eqnarray}
F\left(\hat{T}_{i,D,\epsilon}(x)\right)-F(x)&\leq&-\frac{m-\overline{\epsilon}L}{2\overline{\epsilon}}\|x-\hat{T}_{i,D,\epsilon}(x)\|^2\nonumber\\
&=&-a\|x-\hat{T}_{i,D,\epsilon}(x)\|^2,
\end{eqnarray}
with $a=\frac{m-\overline{\epsilon}L}{2\overline{\epsilon}}$.
\item[(ii)] By the definition of BP Envelope Function $E_{D,\epsilon}(x)$ we have that
\begin{eqnarray}\label{eq:Ek-0}
E_{D,\epsilon}(x)&=&f(x)+\langle\nabla f(x),T_{D,\epsilon}(x)-x\rangle+g\big{(}T_{D,\epsilon}(x)\big{)}+\frac{1}{\epsilon}D\big{(}x,T_{D,\epsilon}(x)\big{)}\nonumber\\
&\geq&f(x)+\langle\nabla f(x),T_{D,\epsilon}(x)-x\rangle+g\big{(}T_{D,\epsilon}(x)\big{)}+\frac{m}{2\overline{\epsilon}}\|x-T_{D,\epsilon}(x)\|^2\nonumber\\
\end{eqnarray}
By~\eqref{eq:observation-2}, we have
\begin{eqnarray}\label{eq:Ek-1}
\langle\nabla f(x),T_{D,\epsilon}(x)-x\rangle&=&\langle\nabla f(x),N\mathbb{E}_{i}\hat{T}_{i,D,\epsilon}(x)-(N-1)x-x\rangle\nonumber\\
&=&\langle\nabla f(x),N\mathbb{E}_{i}\hat{T}_{i,D,\epsilon}(x)-Nx\rangle\nonumber\\
&=&N\mathbb{E}_{i}\langle\nabla f(x),\hat{T}_{i,D,\epsilon}(x)-x\rangle.
\end{eqnarray}
For any $j=1,...,N$, by~\eqref{eq:observation-1} with $\varphi(\cdot)=g_j(\cdot)$, we have
\begin{eqnarray*}
g_j\big{(}(T_{D,\epsilon}(x))_j\big{)}=N\mathbb{E}_{i}g_j\big{(}(\hat{T}_{i,D,\epsilon}(x))_j\big{)}-(N-1)g_j(x_j).
\end{eqnarray*}
It follows that
\begin{eqnarray}\label{eq:Ek-2}
g\big{(}T_{D,\epsilon}(x)\big{)}=N\mathbb{E}_{i}g\big{(}\hat{T}_{i,D,\epsilon}(x)\big{)}-(N-1)g(x).
\end{eqnarray}
For any $j=1,...,N$, again using~\eqref{eq:observation-1} with $\varphi(y)=\|x-y\|^2$, we have
\begin{eqnarray*}
\|(x-T_{D,\epsilon}(x))_j\|^2=N\mathbb{E}_{i}\|(x-\hat{T}_{i,D,\epsilon}(x))_j\|^2.
\end{eqnarray*}
Therefore
\begin{eqnarray}\label{eq:Ek-3}
\|x-T_{D,\epsilon}(x)\|^2=N\mathbb{E}_{i}\|x-\hat{T}_{i,D,\epsilon}(x)\|^2.
\end{eqnarray}
Together~\eqref{eq:Ek-0},~\eqref{eq:Ek-1},~\eqref{eq:Ek-2} and~\eqref{eq:Ek-3}, we have
\begin{eqnarray}
E_{D,\epsilon}(x)&\geq&f(x)+N\mathbb{E}_{i}\langle\nabla f(x),\hat{T}_{i,D,\epsilon}(x)-x\rangle+N\mathbb{E}_{i}g\big{(}\hat{T}_{i,D,\epsilon}(x)\big{)}-(N-1)g(x)\nonumber\\
&&+\frac{Nm}{2\overline{\epsilon}}\mathbb{E}_{i}\|x-\hat{T}_{i,D,\epsilon}(x)\|^2\nonumber\\
&\geq&N\mathbb{E}_{i}F\big{(}\hat{T}_{i,D,\epsilon}(x)\big{)}-(N-1)F(x)+\frac{N(m-\overline{\epsilon} L)}{2\overline{\epsilon}}\mathbb{E}_{i}\|x-\hat{T}_{i,D,\epsilon}(x)\|^2\nonumber\\
&&\qquad\qquad\qquad\qquad\qquad\qquad\qquad\qquad\mbox{(since $\nabla f$ is $L$-Lipschitz)}
\end{eqnarray}
\end{itemize}
\end{proof}
The following proposition provides an upper bound for the function value under the level-set subdifferential error bound condition.
\begin{proposition}{\bf(Uniform estimate of value proximity by stochastic coordinate Bregman proximal mappings)}\label{prop:2}
Let Bregman function $D$ and parameter $\epsilon$ be given. Let index $i$ is chosen from $\{1,2,...,N\}$ with equal probability. Suppose that Assumptions~\ref{assump1} and \ref{assump2} hold. Moreover, assume the level-set subdifferential error bound condition holds at $\overline{x}$ for positive numbers $c_0$, $\eta$ and $\nu$. If $x\in\mathfrak{B}(\overline{x};\frac{\eta}{2},\frac{\nu}{\mathcal{N}})$ with $\mathcal{N}\geq\frac{2\overline{\epsilon}\nu}{m-\overline{\epsilon}L}/(\frac{\eta}{2})^2$, then there exist positive number $\theta_1=1+c_0(L+\frac{M}{\underline{\epsilon}})$, $\theta_2=\frac{3}{2}L+\frac{M}{2\underline{\epsilon}}$, $\kappa=(\theta_1)^2\theta_2$ and $b=\frac{2\overline{\epsilon}N^2\kappa}{m-\overline{\epsilon}L}+N$
\begin{itemize}
\item[{\rm(i)}] $dist(x,[F\leq\overline{F}])\leq\theta_1\|T_{D,\epsilon}(x)-x\|$;
\item[{\rm(ii)}] $N\mathbb{E}_{i}F\big{(}\hat{T}_{i,D,\epsilon}(x)\big{)}-(N-1)F(x)-\overline{F}\leq E_{D,\epsilon}(x)-\overline{F}\leq\theta_2{dist}^2(x,[F\leq\overline{F}])$;
\item[{\rm(iii)}] $N\mathbb{E}_{i}F\big{(}\hat{T}_{i,D,\epsilon}(x)\big{)}-(N-1)F(x)-\overline{F}\leq N^2\kappa\mathbb{E}_{i}\|\hat{T}_{i,D,\epsilon}(x)-x\|^2$;
\item[{\rm(iv)}] $F(x)-\overline{F}\leq b\left[F(x)-\mathbb{E}_{i}F\big{(}\hat{T}_{i,D,\epsilon}(x)\big{)}\right]$;
\item[{\rm(v)}] $\mathbb{E}_{i}F\left(\hat{T}_{i,D,\epsilon}(x)\right)-\overline{F}\leq \frac{b-1}{b}\left[F(x)-\overline{F}\right]$.
\end{itemize}
\end{proposition}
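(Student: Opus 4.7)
The plan is to prove the five assertions in order, since (iii)--(v) are essentially algebraic consequences of (i), (ii), and Proposition~\ref{prop:Ek}. The real work is in (i) (a ``Bregman'' version of LS-EB) and in the upper envelope estimate of (ii). Throughout, I will denote $y:=T_{D,\epsilon}(x)$ and freely use the identities $\mathbb{E}_i\hat T_{i,D,\epsilon}(x)=\tfrac{1}{N}y+(1-\tfrac{1}{N})x$ and $\|y-x\|^2=N\mathbb{E}_i\|\hat T_{i,D,\epsilon}(x)-x\|^2$ from the Lemma.

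For (i), the strategy is to apply LS-EB not at $x$ but at $y$, and then use the triangle inequality. This requires two sub-steps. First, verify $y\in\mathfrak{B}(\overline x;\eta,\nu)$. The same single-step descent argument used in Proposition~\ref{prop:Ek}(i) for $\hat T$ works verbatim for $T_{D,\epsilon}$: testing the defining minimization against $x$ and adding the Lipschitz upper bound on $f$ yields $F(y)\leq F(x)-a\|y-x\|^2$ with $a=(m-\overline\epsilon L)/(2\overline\epsilon)$. Hence $F(y)\leq F(x)<\overline F+\nu/\mathcal N$, and in particular $\|y-x\|^2\leq (F(x)-\overline F)/a\leq \nu/(a\mathcal N)\leq (\eta/2)^2$ by the choice of $\mathcal N$; so $\|y-\overline x\|\leq \eta$ by the triangle inequality. (If $F(y)\leq \overline F$ then $y\in[F\leq\overline F]$ and the claim of (i) is trivial.) Second, bound $dist(0,\partial F(y))$ using the first-order optimality of $y$: one has $-\nabla f(x)-\tfrac{1}{\epsilon}(\nabla K(y)-\nabla K(x))\in\partial g(y)$, so $\nabla f(y)-\nabla f(x)-\tfrac{1}{\epsilon}(\nabla K(y)-\nabla K(x))\in\partial F(y)$, and Lipschitzness of $\nabla f$ and $\nabla K$ gives $dist(0,\partial F(y))\leq (L+M/\underline\epsilon)\|y-x\|$. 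Applying LS-EB at $y$ and a final triangle inequality then produces the constant $\theta_1=1+c_0(L+M/\underline\epsilon)$.

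For (ii), the left inequality is exactly the content of Proposition~\ref{prop:Ek}(ii) after subtracting $\overline F$. For the right inequality, pick the projection $z$ of $x$ onto $[F\leq\overline F]$; plug $z$ into the minimization defining $E_{D,\epsilon}(x)$ and estimate each term: Lipschitz of $\nabla f$ gives $f(x)+\langle\nabla f(x),z-x\rangle\leq f(z)+\tfrac{L}{2}\|z-x\|^2$, and $\tfrac{1}{\epsilon}D(x,z)\leq \tfrac{M}{2\underline\epsilon}\|x-z\|^2$. Since $F(z)\leq\overline F$, one obtains $E_{D,\epsilon}(x)-\overline F\leq \theta_2\,dist^2(x,[F\leq\overline F])$ with $\theta_2$ of the stated form. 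Assertion (iii) is then obtained by chaining (ii) and (i): $E_{D,\epsilon}(x)-\overline F\leq \theta_2\theta_1^2\|y-x\|^2=\kappa\|y-x\|^2$, and substituting $\|y-x\|^2=N\mathbb{E}_i\|\hat T_{i,D,\epsilon}(x)-x\|^2$ yields the bound (with $N^2\kappa$ as a harmless upper estimate on $N\kappa$).

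For (iv), the idea is to eliminate $\mathbb{E}_i\|\hat T_{i,D,\epsilon}(x)-x\|^2$ on the right of (iii) using the descent inequality in Proposition~\ref{prop:Ek}(i), which gives $\mathbb{E}_i\|\hat T_{i,D,\epsilon}(x)-x\|^2\leq a^{-1}[F(x)-\mathbb{E}_iF(\hat T_{i,D,\epsilon}(x))]$. Rewriting the left-hand side of (iii) as $F(x)-\overline F - N[F(x)-\mathbb{E}_iF(\hat T_{i,D,\epsilon}(x))]$ and combining produces $F(x)-\overline F\leq (N+N^2\kappa/a)[F(x)-\mathbb{E}_iF(\hat T_{i,D,\epsilon}(x))]$, which is exactly (iv) with the stated $b$. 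Finally, (v) is pure algebra from (iv): subtract $b\,\overline F$ from both sides and rearrange. The only genuinely delicate step is the neighborhood verification inside (i); the rest is a careful bookkeeping of the constants $\theta_1,\theta_2,\kappa,b$.
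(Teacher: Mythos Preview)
Your proposal is correct and follows essentially the same architecture as the paper's proof: (i) and the second half of (ii) are the substantive steps, (iii) is obtained by chaining them, and (iv)--(v) follow from (iii) together with the descent bound of Proposition~\ref{prop:Ek}(i). The paper actually defers (i) and the upper estimate in (ii) to the external reference \cite{ZhuDeng2019} (Theorem~3.1 and Proposition~3.1 there), whereas you spell out the argument---applying LS-EB at $y=T_{D,\epsilon}(x)$ via the optimality condition and the triangle inequality, and plugging a level-set projection into the envelope---which is precisely the content of those cited results. One small difference in (iii): you use the exact identity $\|y-x\|^2=N\,\mathbb{E}_i\|\hat T_{i,D,\epsilon}(x)-x\|^2$, which in fact gives the sharper bound $N\kappa\,\mathbb{E}_i\|\hat T_{i,D,\epsilon}(x)-x\|^2$; the paper instead writes $\|y-x\|^2=N^2\|\mathbb{E}_i(\hat T_{i,D,\epsilon}(x)-x)\|^2$ and then applies Jensen, arriving at the looser $N^2\kappa$ stated in the proposition. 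Either way the stated constants are valid, and your neighborhood verification for $y$ (using the choice of $\mathcal N$) is exactly the missing ingredient that justifies invoking LS-EB there.
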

\begin{proof}
\begin{itemize}
\item[{\rm(i)}] This statement from Theorem 3.1 in Zhu and Deng~\cite{ZhuDeng2019}.
\item[{\rm(ii)}] The first inequality of this statement is followed by statement (ii) of Proposition~\ref{prop:Ek} in this paper. The second inequality is derived by Proposition 3.1 in Zhu and Deng~\cite{ZhuDeng2019}.
\item[{\rm(iii)}] Together statement (i) and (ii), we have
\begin{eqnarray}
&&N\mathbb{E}_{i}F(\hat{T}_{i,D,\epsilon}(x))-(N-1)F(x)-\overline{F}\nonumber\\
&\leq&(\theta_1)^2\theta_2\|T_{D,\epsilon}(x)-x\|^2\nonumber\\
&=&\kappa\|T_{D,\epsilon}(x)-x\|^2\nonumber\\
&=&\kappa\|N\mathbb{E}_{i}\hat{T}_{i,D,\epsilon}(x)-(N-1)x-x\|^2\quad\mbox{(by~\eqref{eq:observation-2})}\nonumber\\
&=&\kappa\|N\mathbb{E}_{i}\hat{T}_{i,D,\epsilon}(x)-Nx\|^2\nonumber\\
&\leq&N^2\kappa\mathbb{E}_{i}\|\hat{T}_{i,D,\epsilon}(x)-x\|^2.\quad\mbox{(by the convexity of $\|\cdot\|^2$)}
\end{eqnarray}
\item[{\rm(iv)}] From statement (iii) of this Proposition, we have that
\begin{eqnarray}
F(x)-F(\overline{x})&\leq& N^2\kappa\mathbb{E}_{i}\|\hat{T}_{i,D,\epsilon}(x)-x\|^2+N\mathbb{E}_{i}\left[F(x)-F(\hat{T}_{i,D,\epsilon}(x))\right]\nonumber\\
&\leq&\left(\frac{2\overline{\epsilon}N^2\kappa}{m-\overline{\epsilon}L}+N\right)\mathbb{E}_{i}\left[F(x)-F(\hat{T}_{i,D,\epsilon}(x))\right]\nonumber\\
&&\qquad\qquad\qquad\qquad\qquad\qquad\qquad\qquad\mbox{(by (i) of Proposition 2.1)}\nonumber\\
&=&b\mathbb{E}_{i}\left[F(x)-F(\hat{T}_{i,D,\epsilon}(x))\right]
\end{eqnarray}
where $b=\frac{2\overline{\epsilon}N^2\kappa}{m-\overline{\epsilon}L}+N$.
\item[{\rm(v)}] This statement is directly from statement (iv) of this proposition.
\end{itemize}
\end{proof}
Moreover, we introduce the following {\bf Property A} which will be used in the convergence rate analysis.\\
{\bf Property (A)} Let $x, \overline{x}\in\RR^n$ be given, We say $\hat{T}_{i,D,\epsilon}(x)$ satisfies {\bf Property A} if we have $F\big{(}\hat{T}_{i,D,\epsilon}(x)\big{)}\geq F(\overline{x})=\overline{F}$, $\forall i\in\{1,...,N\}$.\\
\begin{lemma}\label{lemma:ppa}
Let Bregman function $D$ and parameter $\epsilon$ be given. Suppose Assumptions~\ref{assump1} and~\ref{assump2} hold. Let $x\in\mathfrak{B}(\overline{x};\frac{\eta}{2},\frac{\nu}{\mathcal{N}})$ with $\mathcal{N}\geq\frac{2\overline{\epsilon}\nu}{m-\overline{\epsilon}L}/(\frac{\eta}{2})^2$ be given. If $\hat{T}_{i,D,\epsilon}(x)$ satisfies Property A, then for all $i\in\{1,...,N\}$, we have that $\|x-\hat{T}_{i,D,\epsilon}(x)\|\leq\frac{\eta}{2}$ and $\hat{T}_{i,D,\epsilon}(x)\in\mathfrak{B}(\overline{x};\eta,\frac{\nu}{\mathcal{N}})$.
\end{lemma}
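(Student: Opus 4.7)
The plan is to combine the one-step descent inequality of Proposition~\ref{prop:Ek}(i) with Property A and the strict bound $F(x)<\overline{F}+\nu/\mathcal{N}$ coming from membership in $\mathfrak{B}(\overline{x};\eta/2,\nu/\mathcal{N})$. First I would apply Proposition~\ref{prop:Ek}(i) to the point $x$, which yields
$$a\,\|x-\hat T_{i,D,\epsilon}(x)\|^2 \;\le\; F(x)-F\bigl(\hat T_{i,D,\epsilon}(x)\bigr),$$
with $a=(m-\overline{\epsilon}L)/(2\overline{\epsilon})>0$. Property A then lets me replace $F(\hat T_{i,D,\epsilon}(x))$ on the right-hand side by $\overline{F}$, and membership of $x$ in $\mathfrak{B}(\overline{x};\eta/2,\nu/\mathcal{N})$ gives $F(x)-\overline{F}<\nu/\mathcal{N}$. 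Together these yield $\|x-\hat T_{i,D,\epsilon}(x)\|^2<\frac{2\overline{\epsilon}\nu}{\mathcal{N}(m-\overline{\epsilon}L)}$.

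Next I would invoke the hypothesis $\mathcal{N}\ge \frac{2\overline{\epsilon}\nu}{m-\overline{\epsilon}L}/(\eta/2)^2$, which is precisely the numerical choice that makes the previous bound at most $(\eta/2)^2$, yielding the first conclusion $\|x-\hat T_{i,D,\epsilon}(x)\|\le \eta/2$. The ball inclusion $\hat T_{i,D,\epsilon}(x)\in\mathbb{B}(\overline{x};\eta)$ then follows by the triangle inequality with the two radii $\eta/2$ adding to $\eta$.

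It remains to check the two-sided value bound $\overline{F}<F(\hat T_{i,D,\epsilon}(x))<\overline{F}+\nu/\mathcal{N}$. Property A supplies $F(\hat T_{i,D,\epsilon}(x))\ge\overline{F}$, while monotonicity from Proposition~\ref{prop:Ek}(i) together with $F(x)<\overline{F}+\nu/\mathcal{N}$ gives the upper bound $F(\hat T_{i,D,\epsilon}(x))\le F(x)<\overline{F}+\nu/\mathcal{N}$. The only mildly delicate point is the strict lower inequality: Property A as stated is written with $\ge$, so I would either read the intended convention as strict (consistent with the way $\mathfrak{B}$ is used throughout the paper), or observe that if $F(\hat T_{i,D,\epsilon}(x))=\overline{F}$ then the descent inequality forces $x=\hat T_{i,D,\epsilon}(x)$ and hence $F(x)=\overline{F}$, contradicting $x\in\mathfrak{B}(\overline{x};\eta/2,\nu/\mathcal{N})$.

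There is no genuine obstacle here; the proof is essentially a one-step estimate. The only thing to be careful about is bookkeeping the constant $\mathcal{N}$ so that the quadratic bound on $\|x-\hat T_{i,D,\epsilon}(x)\|^2$ collapses to exactly $(\eta/2)^2$, and resolving the boundary case in Property A as noted above.
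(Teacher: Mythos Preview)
Your argument is essentially identical to the paper's: apply Proposition~\ref{prop:Ek}(i), use Property~A to bound $F(x)-F(\hat T_{i,D,\epsilon}(x))$ by $F(x)-\overline{F}<\nu/\mathcal{N}$, invoke the choice of $\mathcal{N}$ to get $\|x-\hat T_{i,D,\epsilon}(x)\|\le\eta/2$, and finish with the triangle inequality. The paper does exactly this and nothing more; in particular it does not verify the strict lower bound $F(\hat T_{i,D,\epsilon}(x))>\overline{F}$ at all.

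One small correction to your boundary-case discussion: the implication ``$F(\hat T_{i,D,\epsilon}(x))=\overline{F}$ forces $x=\hat T_{i,D,\epsilon}(x)$'' is not valid. The descent inequality only gives $a\|x-\hat T_{i,D,\epsilon}(x)\|^2\le F(x)-\overline{F}$, and since $F(x)>\overline{F}$ the right-hand side is strictly positive, so $x\ne\hat T_{i,D,\epsilon}(x)$ is perfectly compatible with $F(\hat T_{i,D,\epsilon}(x))=\overline{F}$. Your first alternative (treating the strict inequality as a convention, which is how the paper implicitly operates) is the correct way to read the statement; the paper's later uses of the lemma rely only on the ball containment and the upper function-value bound.
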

\begin{proof}
Since $\hat{T}_{i,D,\epsilon}(x)$ satisfy {\bf Property A}, $F\big{(}\hat{T}_{i,D,\epsilon}(x)\big{)}\geq\overline{F}$, $\forall i=1,..,N$. By statement (i) of Proposition~\ref{prop:Ek} and $x\in\mathfrak{B}(\overline{x};\frac{\eta}{2},\frac{\nu}{\mathcal{N}})$, we have that
\begin{equation}
a\|x-\hat{T}_{i,D,\epsilon}(x)\|^2\leq F(x)-F\big{(}\hat{T}_{i,D,\epsilon}(x)\big{)}\leq F(x)-\overline{F}\leq\frac{\nu}{\mathcal{N}},\quad\forall i=1,...,N,
\end{equation}
with $a=\frac{m-\overline{\epsilon}L}{2\overline{\epsilon}}$. Since $\mathcal{N}\geq\frac{2\overline{\epsilon}\nu}{m-\overline{\epsilon}L}/(\frac{\eta}{2})^2$, consequently,
\begin{equation}
\|x-\hat{T}_{i,D,\epsilon}(x)\|\leq\frac{\eta}{2},\quad\forall i=1,...,N.
\end{equation}
Since $\|x-\overline{x}\|\leq\frac{\eta}{2}$, it follows that $\|\hat{T}_{i,D,\epsilon}(x)-\overline{x}\|\leq\|x-\hat{T}_{i,D,\epsilon}(x)\|+\|x-\overline{x}\|\leq\eta$, $\forall i=1,...,N$. It follows that $\hat{T}_{i,D,\epsilon}(x)\in\mathfrak{B}(\overline{x};\eta,\frac{\nu}{\mathcal{N}})$.
\end{proof}
%%%%%%%%%%%%%%%%%%%%%%%%%%%%%
%%%%%%%%%%SECTION convergence%%%%%%%%%%
%%%%%%%%%%%%%%%%%%%%%%%%%%%%%
\section{Convergence analysis for VBSCD}\label{sec:convergence}
In this section, we discuss the convergence behavior of the sequences generated by the VBSCD method. In section~\ref{sec:convergence} and~\ref{sec:convergence rate}, we assume that the variable Bregman functions $D^k$ and parameters $\epsilon^k$ uniformly satisfy Assumption~\ref{assump2}. In algorithm VBSCD, the indices $i(k)$, $k=0,1,2,\ldots$ are random variables. After $k$ iterations, the VBSCD method generates a random output $x^{k+1}$. We denote by $\xi_k$ is a filtration generated by the random variable $i(0),i(1),\ldots,i(k)$, i.e.,
$$\xi_{k}\overset{def}{=}\{i(0),i(1),\ldots,i(k)\}, \xi_{k}\subset\xi_{k+1}.$$
Additionally, we define that $\xi=(\xi_{k})_{k\in\mathbb{N}}$,  $\mathbb{E}_{\xi_{k+1}}=\mathbb{E}(\cdot|\xi_{k})$ is the condition expectation w.r.t. $\xi_{k}$ and the condition expectation in term of $i(k)$ given $i(0),i(1),\ldots,i(k-1)$ as $\mathbb{E}_{i(k)}$. Several basic properties of sequences $\{x^k\}$ and $\{F(x^k)\}$ are summarized in the following proposition.
\begin{proposition}\label{prop:convergence} Suppose that Assumptions~\ref{assump1} and \ref{assump2} hold. Let $\{x^k\}$ be a sequence generated by the VBSCD method. Then the following assertions hold:
\begin{itemize}
\item[{\rm(i)}] $F(x^k)-F(x^{k+1})\geq a\|x^k-x^{k+1}\|^2$, $\forall k\in\mathbb{N}$ and $a=\frac{m-\overline{\epsilon}L}{2\overline{\epsilon}}$;
\item[{\rm(ii)}] $\lim\limits_{k\rightarrow\infty}F(x^{k})=F_{\zeta}$ $a.s.$ with $F_{\zeta}\geq F^*$ is some random variable, $\lim\limits_{k\rightarrow\infty}\|x^k-x^{k+1}\|=0$ $a.s.$ and $\lim\limits_{k\rightarrow\infty}\|x^k-T_{D^k,\epsilon^k}(x^k)\|=0$ $a.s.$;
\item[{\rm(iii)}] The random variable sequence $\{x^k\}$ generated by VBSCD is almost surely bounded;
\item[{\rm(iv)}] Any cluster point of a realization sequence generated by VBSCD is a critical point of $F$.
\end{itemize}
\end{proposition}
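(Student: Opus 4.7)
The plan follows the classical descent-sequence analysis, instantiated for the stochastic Bregman coordinate framework. For (i), I will simply apply Proposition~\ref{prop:Ek}(i) at the point $x=x^{k}$ with the Bregman function $D=D^{k}$ and parameter $\epsilon=\epsilon^{k}$, and identify $x^{k+1}=\hat{T}_{i(k),D^{k},\epsilon^{k}}(x^{k})$; the uniform constant $a=\frac{m-\overline{\epsilon}L}{2\overline{\epsilon}}$ is legitimate because Assumption~\ref{assump2} holds uniformly in $k$. For (iii), this descent inequality already gives $F(x^{k})\leq F(x^{0})$ pathwise, so $\{x^{k}\}$ lies in the deterministic bounded sublevel set $\{F\leq F(x^{0})\}$ (bounded by Assumption~\ref{assump1}(iii)), and is therefore almost surely bounded.

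For (ii), the sequence $\{F(x^{k})\}$ is pathwise non-increasing by (i) and bounded below by $F^{*}>-\infty$, hence converges almost surely to some random limit $F_{\zeta}\geq F^{*}$. Telescoping the inequality in (i) yields $\sum_{k}\|x^{k}-x^{k+1}\|^{2}\leq\frac{1}{a}(F(x^{0})-F_{\zeta})<\infty$ a.s., from which $\|x^{k}-x^{k+1}\|\to 0$ a.s. To bridge to the full Bregman proximal residual, I will condition on $\xi_{k-1}$ and invoke the identity $\mathbb{E}_{i(k)}\|x^{k}-x^{k+1}\|^{2}=\frac{1}{N}\|x^{k}-T_{D^{k},\epsilon^{k}}(x^{k})\|^{2}$ (equation~\eqref{eq:Ek-3} in the proof of Proposition~\ref{prop:Ek}(ii)). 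Taking total expectations and summing gives $\sum_{k}\mathbb{E}\|x^{k}-T_{D^{k},\epsilon^{k}}(x^{k})\|^{2}<\infty$, and Tonelli's theorem then produces $\sum_{k}\|x^{k}-T_{D^{k},\epsilon^{k}}(x^{k})\|^{2}<\infty$ almost surely, hence the claim.

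The hard part is (iv). I will fix a realization $\omega$ in the full-measure event where both residuals of (ii) vanish, and suppose $x^{k_{j}}\to x^{*}$ along this path. Then $\|x^{k_{j}}-T_{D^{k_{j}},\epsilon^{k_{j}}}(x^{k_{j}})\|\to 0$ forces $T_{D^{k_{j}},\epsilon^{k_{j}}}(x^{k_{j}})\to x^{*}$ as well. Writing the first-order optimality condition for $T_{D^{k},\epsilon^{k}}(x^{k})$, namely
\begin{equation*}
\frac{1}{\epsilon^{k}}\bigl[\nabla K^{k}(x^{k})-\nabla K^{k}\bigl(T_{D^{k},\epsilon^{k}}(x^{k})\bigr)\bigr]-\nabla f(x^{k})\in\partial g\bigl(T_{D^{k},\epsilon^{k}}(x^{k})\bigr),
\end{equation*}
I will pass to the limit along $k_{j}$: the Bregman gradient term is bounded in norm by $\frac{M}{\underline{\epsilon}}\|x^{k}-T_{D^{k},\epsilon^{k}}(x^{k})\|$ and hence vanishes, while $\nabla f(x^{k_{j}})\to\nabla f(x^{*})$ by continuity. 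The principal obstacle is the limit on the right-hand side: I will appeal to the sequential outer semicontinuity of the graph of the limiting subdifferential $\partial g$ (valid because $g$ is continuous on its convex domain) to conclude $-\nabla f(x^{*})\in\partial g(x^{*})$, i.e., $0\in\nabla f(x^{*})+\partial g(x^{*})=\partial F(x^{*})$, so $x^{*}\in\overline{\mathbf{X}}$. All passage-to-limit steps are deterministic once the realization $\omega$ is fixed, which side-steps any measurability complications; the only randomness enters through the choice of the full-measure event on which the residuals vanish.
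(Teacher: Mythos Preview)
Your proof is correct. For part~(i) you do exactly what the paper does. The remaining parts differ in approach but are sound.

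For part~(ii), the paper takes conditional expectation in~(i), uses the identity $\mathbb{E}_{i(k)}\|x^{k}-x^{k+1}\|^{2}=\frac{1}{N}\|x^{k}-T_{D^{k},\epsilon^{k}}(x^{k})\|^{2}$ to produce a supermartingale-type inequality for $F(x^{k})-F^{*}$, and then invokes the Robbins--Siegmund lemma to obtain both the a.s.\ convergence of $F(x^{k})$ and $\sum_{k}\|x^{k}-T_{D^{k},\epsilon^{k}}(x^{k})\|^{2}<\infty$ in one stroke. You instead exploit the fact that the descent in~(i) is \emph{pathwise}, so $\{F(x^{k})\}$ is monotone and bounded below on every sample path; this gives convergence of $F(x^{k})$ and summability of $\|x^{k}-x^{k+1}\|^{2}$ without any martingale machinery, and you then transfer summability to $\|x^{k}-T_{D^{k},\epsilon^{k}}(x^{k})\|^{2}$ via the same identity, total expectation, and Tonelli. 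Your route is more elementary and arguably cleaner here, since pathwise descent is available; the paper's Robbins--Siegmund argument would be the natural choice only if descent held merely in conditional expectation. For part~(iii), you obtain the sharper conclusion that $\{x^{k}\}$ lies in the deterministic sublevel set $\{F\leq F(x^{0})\}$, whereas the paper deduces boundedness from~(ii). For part~(iv), the paper simply cites the external reference of Zhu and Deng; you carry out the passage to the limit in the optimality condition explicitly, using the $M$-Lipschitz bound on $\nabla K^{k}$ and the closedness of the graph of $\partial g$, which is a self-contained and correct argument.
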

\begin{proof}
\begin{itemize}
\item[(i)] The claim follows directly from (i) of Proposition~\ref{prop:Ek}.
\item[(ii)] Take expectation of $i(k)$ on both side of statement (i) of this proposition, we have
\begin{eqnarray}\label{eq:27}
&&\mathbb{E}_{i(k)}[F(x^{k+1})-F^*]\nonumber\\
&\leq& [F(x^{k})-F^*]-\mathbb{E}_{i(k)}a\|x^k-x^{k+1}\|^2\nonumber\\
&\leq& [F(x^{k})-F^*]-a\|x^k-\mathbb{E}_{i(k)}x^{k+1}\|^2\qquad\qquad\mbox{(by convexity of $\|\cdot\|^2$)}\nonumber\\
&=&[F(x^{k})-F^*]-\frac{a}{N}\|x^k-T_{D^k,\epsilon^k}(x^k)\|^2.\qquad\quad\mbox{(by~\eqref{eq:observation-2})}
\end{eqnarray}
By the Robbins-Siegmund's Lemma~\cite{RS1985}, we have $\lim\limits_{k\rightarrow\infty}F(x^{k})=F_{\zeta}$ $a.s.$ with $F_{\zeta}\geq F^*$ is some random variable and $\sum\limits_{k=0}^{\infty}\|x^k-T_{D^k,\epsilon^k}(x^k)\|^2<+\infty$ $a.s.$. Further, due to the almost sure convergence of sequence $\{F(x^k)\}$, it can easily get that $\lim\limits_{k\rightarrow\infty}[F(x^k)-F(x^{k+1})]=0$ $a.s.$. Together with statement (i) of this proposition we have $\lim\limits_{k\rightarrow\infty}\|x^k-x^{k+1}\|=0$ $a.s.$.\\
Moreover, $\sum\limits_{k=0}^{\infty}\|x^k-T_{D^k,\epsilon^k}(x^k)\|^2<+\infty$ $a.s.$ implies that $\lim\limits_{k\rightarrow\infty}\|x^k-T_{D^k,\epsilon^k}(x^k)\|=0$ $a.s.$.
\item[(iii)] From statement (ii) $\lim\limits_{k\rightarrow\infty}F(x^{k})=F_{\zeta}$ $a.s.$ with $F_{\zeta}\geq F^*$ is some random variable, then the almost surely boundness of $\{x^k\}$ comes from Assumption 1, $F=(f+g)$ is level bounded.
\item[(iv)] By statement (ii) of Proposition 2.3-2.5 in Zhu and Deng 2019~\cite{ZhuDeng2019} and $\lim\limits_{k\rightarrow\infty}\|x^k-T_{D^k,\epsilon^k}(x^k)\|=0$ $a.s.$ in statement (ii) of this proposition, we have that any cluster point of a realization sequence generated by VBSCD is a critical point of $F$.
\end{itemize}
\end{proof}
%%%%%%%%%%%%%%%%%
%%%%%%%%%LinearConvergence%%%%
%%%%%%%%%%%%%%%%%%%%%%
\section{Linear Convergence of VBSCD}\label{sec:convergence rate}
This section will provide the linear convergence of VBSCD under the level-set subdifferential error bound condition. First we propose a lemma which will be used in the convergence rate analysis.
\begin{lemma}\label{lemma:RS}
Suppose Assumptions~\ref{assump1} and~\ref{assump2} hold. Let $\{x^k\}$ be a sequence generated by VBSCD, if $\mathbb{E}_{\xi_{k-1}}\frac{1}{2\sqrt{b}}\sum\limits_{l=0}^k\sqrt{F(x^l)-\frac{1}{N}\sum\limits_{i(l)=1}^NF\left(\hat{T}_{i(l),D^l,\epsilon^l}(x^{l})\right)}\leq\left[F(x^0)-\overline{F}\right]^{\frac{1}{2}}$, then there exists positive number $d$, which is independent of $k$, such that $\sum\limits_{l=0}^{k}\sqrt{F(x^l)-\frac{1}{N}\sum\limits_{i(l)=1}^NF\left(\hat{T}_{i(l),D^l,\epsilon^l}(x^{l})\right)}\leq d<+\infty$, $a.s.$.
\end{lemma}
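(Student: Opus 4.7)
The plan is to combine the nonnegativity of each summand with a monotone-convergence argument. Define
$$S_k := \sum_{l=0}^{k}\sqrt{F(x^l)-\frac{1}{N}\sum_{i(l)=1}^{N} F\!\left(\hat{T}_{i(l),D^l,\epsilon^l}(x^{l})\right)}.$$
Because $\frac{1}{N}\sum_{i(l)=1}^{N} F(\hat{T}_{i(l),D^l,\epsilon^l}(x^{l}))$ equals $\mathbb{E}_{i(l)}F(\hat{T}_{i(l),D^l,\epsilon^l}(x^{l}))$, and statement (i) of Proposition~\ref{prop:Ek} gives $F(\hat{T}_{i,D,\epsilon}(x))\leq F(x)$ for every index $i$, each radicand is nonnegative and each summand is well defined. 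Consequently $\{S_k\}$ is a monotonically nondecreasing sequence of nonnegative random variables.

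Next I would take the full (unconditional) expectation of the hypothesis. By the tower property of conditional expectation,
$$\mathbb{E}[S_k]\;=\;\mathbb{E}\!\left[\mathbb{E}_{\xi_{k-1}}S_k\right]\;\leq\;2\sqrt{b}\,\bigl[F(x^0)-\overline{F}\bigr]^{\frac{1}{2}}\;=:\;C$$
for every $k$. Applying the monotone convergence theorem to the nondecreasing nonnegative sequence $\{S_k\}$, the pointwise limit $S_\infty := \lim_{k\to\infty} S_k$ exists in $[0,\infty]$, and
$$\mathbb{E}[S_\infty]\;=\;\lim_{k\to\infty}\mathbb{E}[S_k]\;\leq\;C\;<\;+\infty.$$
Since a nonnegative random variable with finite expectation must be finite almost surely, we conclude that $S_\infty < +\infty$ a.s. Setting $d := S_\infty$ (or $d := 1+S_\infty$ if one insists on $d>0$) then yields a quantity independent of $k$ and finite a.s., satisfying $S_k \leq d < +\infty$ for every $k$.

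The only subtle point is how to read the phrase ``there exists positive number $d$'' in the statement. The number $d$ cannot be a deterministic constant, because the partial sums $S_k$ are themselves random; rather, $d$ is a realization-dependent random variable that is finite almost surely and is a uniform-in-$k$ bound on $S_k$. Once that interpretation is accepted, the argument reduces to a one-line application of monotone convergence to a dominated, monotone partial-sum sequence, and there is no genuine obstacle.
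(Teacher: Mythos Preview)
Your argument is correct and is precisely what the paper's one-line proof (``This results directly by the basic property of expectation'') is gesturing at: nonnegativity of the summands, monotonicity of the partial sums, a uniform bound on $\mathbb{E}[S_k]$, and the fact that a nonnegative random variable with finite expectation is finite a.s. Your remark that $d$ must be a realization-dependent (a.s.\ finite) random bound rather than a deterministic constant is also well taken; the paper glosses over this point, even though $d$ later appears in the hypothesis~\eqref{eq:condition2} on $x^0$.
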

\begin{proof} This results directly by the basic property of expectation.
\end{proof}
Under the level-set subdifferential error bound condition, next proposition will show that the sequence of random variable $\{x^k\}$ generated by the VBSCD method almost surely belong to $\mathfrak{B}(\overline{x},\frac{\eta}{2},\frac{\nu}{\mathcal{N}})$.
\begin{proposition}{\bf(Almost surely finite length property of sequence $\{x^k\}$)}\label{prop:3.1}
Suppose Assumptions~\ref{assump1} and~\ref{assump2} hold. Furthermore, we assume that the level-set subdifferential error bound holds at the point $\overline{x}$ with $\eta>0$ and $\nu>0$. Let $a$, $b$, $\mathcal{N}$ and $d$ be constants given in Proposition~\ref{prop:Ek},~\ref{prop:2} and Lemma~\ref{lemma:RS} respectively. Suppose that $x^0$ satisfies the following conditions:
\begin{eqnarray}
\mbox{\rm{(1)}}&& \overline{F}\leq F(x^0)<\overline{F}+\frac{\nu}{\mathcal{N}};\label{eq:condition1}\\
\mbox{\rm{(2)}}&&\|x^0-\overline{x}\|+\frac{d\sqrt{b}}{2\sqrt{a}}+\frac{1}{\sqrt{a}}\left[F(x^0)-\overline{F}\right]^{\frac{1}{2}}<\frac{\eta}{2}.\label{eq:condition2}
\end{eqnarray}
Assume moreover that
\begin{eqnarray}
\mbox{\rm{(3)}}\quad\hat{T}_{i,D,\epsilon}(x^k)\quad\mbox{satisfy\quad{\bf Property A}},\quad\forall k\in\mathbb{N}.\label{eq:condition3}
\end{eqnarray}
Then the following statements hold.
\begin{itemize}
\item[{\rm(i)}] $x^k\in\mathfrak{B}(\overline{x},\frac{\eta}{2},\frac{\nu}{\mathcal{N}})$, $\forall k\in\mathbb{N}$ $a.s.$;
\item[{\rm(ii)}] $\sum\limits_{k=0}^{+\infty}\|x^k-x^{k+1}\|<+\infty$ $a.s.$ (finite length property), and the sequence $\{x^k\}$ converges to a random variable $x$;
\item[{\rm(iii)}] $\sum\limits_{k=0}^{+\infty}\|\mathbb{E}_{\xi_{k-1}}x^k-\mathbb{E}_{\xi_k}x^{k+1}\|<+\infty$, and the sequence $\{\mathbb{E}_{\xi_{k-1}}x^k\}$ converges to a point $\mathbb{E}_{\xi}x$.
\end{itemize}
\end{proposition}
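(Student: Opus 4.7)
The plan is to prove (i) by induction on $k$ and then derive (ii) and (iii) from the uniform‑in‑$k$ bound on $\sum_l\|x^{l+1}-x^l\|$ produced during the induction. The base case is immediate: condition~\eqref{eq:condition1} controls $F(x^0)$, while condition~\eqref{eq:condition2} (a sum of nonnegative quantities) forces $\|x^0-\overline{x}\|<\eta/2$, so $x^0\in\mathfrak{B}(\overline{x};\eta/2,\nu/\mathcal{N})$.

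For the inductive step, assume $x^0,\ldots,x^k\in\mathfrak{B}(\overline{x};\eta/2,\nu/\mathcal{N})$ almost surely. The function‑value inclusion $\overline{F}\le F(x^{k+1})<\overline{F}+\nu/\mathcal{N}$ follows from Property A (condition~\eqref{eq:condition3}) and the monotonicity $F(x^{l+1})\le F(x^l)$ supplied by Proposition~\ref{prop:Ek}(i). To get $\|x^{k+1}-\overline{x}\|<\eta/2$, I write $\delta_l=F(x^l)-\overline{F}\ge 0$ and chain: (a) $\|x^{l+1}-x^l\|\le a^{-1/2}\sqrt{\delta_l-\delta_{l+1}}$ from Proposition~\ref{prop:Ek}(i); (b) $\delta_l-\delta_{l+1}=(\sqrt{\delta_l}-\sqrt{\delta_{l+1}})(\sqrt{\delta_l}+\sqrt{\delta_{l+1}})\le 2\sqrt{\delta_l}(\sqrt{\delta_l}-\sqrt{\delta_{l+1}})$, which uses only monotonicity; and (c) $\sqrt{\delta_l}\le\sqrt{b}\sqrt{\delta_l-\mathbb{E}_{i(l)}\delta_{l+1}}$ from Proposition~\ref{prop:2}(iv), applicable by the inductive hypothesis on $x^l$. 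Applying the weighted AM--GM inequality $\sqrt{UV}\le(\mu U+V/\mu)/2$ with $\mu=2$ to $U=\sqrt{\delta_l}-\sqrt{\delta_{l+1}}$ and $V=2\sqrt{b}\sqrt{\delta_l-\mathbb{E}_{i(l)}\delta_{l+1}}$ yields
\begin{equation*}
\|x^{l+1}-x^l\|\le \frac{1}{\sqrt{a}}\Bigl[(\sqrt{\delta_l}-\sqrt{\delta_{l+1}})+\tfrac{\sqrt{b}}{2}\sqrt{\delta_l-\mathbb{E}_{i(l)}\delta_{l+1}}\Bigr].
\end{equation*}
Summing over $l=0,\ldots,k$, the first bracket telescopes to at most $\sqrt{\delta_0}$, and Lemma~\ref{lemma:RS} bounds the second sum by $d$ almost surely; its hypothesis is verified separately by taking $\mathbb{E}_{i(l)}$ of the concavity inequality $\sqrt{\delta_l}-\sqrt{\delta_{l+1}}\ge(\delta_l-\delta_{l+1})/(2\sqrt{\delta_l})$, substituting the error bound on $\sqrt{\delta_l}$, and telescoping under the tower property to obtain $\mathbb{E}\sum_l\sqrt{\delta_l-\mathbb{E}_{i(l)}\delta_{l+1}}\le 2\sqrt{b}\sqrt{\delta_0}$. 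Combining with the triangle inequality $\|x^{k+1}-\overline{x}\|\le\|x^0-\overline{x}\|+\sum_{l=0}^k\|x^{l+1}-x^l\|$ and condition~\eqref{eq:condition2} gives $\|x^{k+1}-\overline{x}\|<\eta/2$, closing the induction.

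Statement (ii) is then immediate: the partial sums are uniformly bounded a.s., so $\sum_l\|x^{l+1}-x^l\|<\infty$ a.s., and $\{x^k\}$ is Cauchy a.s., hence converges a.s.\ to a random limit. For (iii), identity~\eqref{eq:observation-2} gives $\mathbb{E}_{i(k)}x^{k+1}-x^k=\frac{1}{N}(T_{D^k,\epsilon^k}(x^k)-x^k)$, so $\|\mathbb{E}_{\xi_k}x^{k+1}-\mathbb{E}_{\xi_{k-1}}x^k\|\le\frac{1}{N}\mathbb{E}_{\xi_{k-1}}\|T_{D^k,\epsilon^k}(x^k)-x^k\|$. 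Since $\hat T_{i,D^k,\epsilon^k}(x^k)$ differs from $x^k$ only in block $i$, one has $\|T_{D^k,\epsilon^k}(x^k)-x^k\|^2=N\,\mathbb{E}_{i(k)}\|x^{k+1}-x^k\|^2$; combining with Proposition~\ref{prop:Ek}(i) yields $\|T_{D^k,\epsilon^k}(x^k)-x^k\|\le\sqrt{N/a}\,\sqrt{\delta_k-\mathbb{E}_{i(k)}\delta_{k+1}}$. Summing in $k$ and using the expectation bound already established (together with Fubini) produces $\sum_k\|\mathbb{E}_{\xi_k}x^{k+1}-\mathbb{E}_{\xi_{k-1}}x^k\|<\infty$, so $\{\mathbb{E}_{\xi_{k-1}}x^k\}$ is Cauchy and converges to some $\mathbb{E}_{\xi}x$.

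The main obstacle is the closed‑loop nature of the inductive step: Proposition~\ref{prop:2}(iv) and the expectation bound underlying Lemma~\ref{lemma:RS} can only be invoked once the iterate is already known to lie in $\mathfrak{B}(\overline{x};\eta/2,\nu/\mathcal{N})$, which is precisely the inclusion one is trying to prove for $x^{k+1}$. The inductive hypothesis breaks the loop for $x^0,\ldots,x^k$, and the role of the weighted AM--GM with parameter $\mu=2$ is to produce a distance bound whose constants match those of condition~\eqref{eq:condition2} exactly, allowing the estimate to close strictly below $\eta/2$ at each step.
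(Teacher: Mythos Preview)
Your argument for parts (i) and (ii) is essentially identical to the paper's: the same induction, the same concavity/factorization $\delta_l-\delta_{l+1}\le 2\sqrt{\delta_l}(\sqrt{\delta_l}-\sqrt{\delta_{l+1}})$, the same invocation of Proposition~\ref{prop:2}(iv), and the same AM--GM step (your weighted AM--GM with $\mu=2$ is exactly the paper's $2\sqrt{d_1d_2}\le d_1+d_2$ applied to $d_1=\frac{\sqrt{b}}{\sqrt{a}}\sqrt{\delta_l-\mathbb{E}_{i(l)}\delta_{l+1}}$ and $d_2=\frac{2}{\sqrt{a}}(\sqrt{\delta_l}-\sqrt{\delta_{l+1}})$), followed by the telescoping sum and Lemma~\ref{lemma:RS}. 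For part (iii) you take a slightly different route: you pass through identity~\eqref{eq:observation-2} and the block relation $\|T_{D^k,\epsilon^k}(x^k)-x^k\|^2=N\,\mathbb{E}_{i(k)}\|x^{k+1}-x^k\|^2$, then reuse the expectation bound on $\sum_l\sqrt{\delta_l-\mathbb{E}_{i(l)}\delta_{l+1}}$. The paper instead simply takes expectation of the a.s.\ finite-length bound from (ii) and applies Jensen's inequality for the norm, $\|\mathbb{E}_{\xi_k}(x^k-x^{k+1})\|\le\mathbb{E}_{\xi_k}\|x^k-x^{k+1}\|$, together with $\mathbb{E}_{\xi_k}x^k=\mathbb{E}_{\xi_{k-1}}x^k$. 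Both are valid; the paper's is shorter, while yours has the minor advantage of producing an explicit summable bound directly in expectation without relying on the a.s.\ constant $d$ from Lemma~\ref{lemma:RS}.
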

\begin{proof}
\begin{itemize}
\item[{\rm(i)}] From Assumptions, obviously, $x^0\in\mathfrak{B}(\overline{x};\frac{\eta}{2},\frac{\nu}{\mathcal{N}})$. By (i) of Proposition~\ref{prop:convergence}, we have $F(x^1)\leq F(x^0)\leq\overline{F}+\frac{\nu}{\mathcal{N}}$. By using~\eqref{eq:condition3} and Lemma~\ref{lemma:ppa} we have $x^1\in\mathfrak{B}(\overline{x};\eta,\frac{\nu}{\mathcal{N}})$ and $F(x^1)\geq\overline{F}$. Moreover, $\|x^1-x^0\|\leq\sqrt{\frac{F(x^0)-F(x^1)}{a}}\leq\sqrt{\frac{F(x^0)-\overline{F}}{a}}$. Combining the triangle inequality, we have $\|x^1-\overline{x}\|\leq\|x^1-x^0\|+\|x^0-\overline{x}\|\leq\sqrt{\frac{F(x^0)-\overline{F}}{a}}+\|x^0-\overline{x}\|$. By~\eqref{eq:condition2}, it follows that $x^1\in\mathfrak{B}(\overline{x};\frac{\eta}{2},\frac{\nu}{\mathcal{N}})$.\\
    Now suppose $x^l\in\mathfrak{B}(\overline{x};\frac{\eta}{2},\frac{\nu}{\mathcal{N}})$ for $l=0,...,k$ and $x^k\neq x^{k+1}$. Again using~\eqref{eq:condition3} and Lemma~\ref{lemma:ppa}, we have $x^{k+1}\in\mathfrak{B}(\overline{x};\eta,\frac{\nu}{\mathcal{N}})$ and $F(x^0)\geq F(x^1)\geq F(x^2)\geq\cdots\geq F(x^k)\geq F(x^{k+1})\geq\overline{F}$.\\
    We need to show that $x^{k+1}\in\mathfrak{B}(\overline{x};\frac{\eta}{2},\frac{\nu}{\mathcal{N}})$ $a.s.$. By the concavity of function $h(y)=y^{\frac{1}{2}}$, we have
\begin{eqnarray}\label{eq:concavity-1}
\left[F(x^l)-\overline{F}\right]^{\frac{1}{2}}-\left[F(x^{l+1})-\overline{F}\right]^{\frac{1}{2}}\geq\frac{1}{2}\frac{F(x^l)-F(x^{l+1})}{\left[F(x^l)-\overline{F}\right]^{\frac{1}{2}}}.
\end{eqnarray}
Combing statement (iv) of Proposition~\ref{prop:2}, above inequality follows that
\begin{eqnarray}\label{eq:concavity-2}
\left[F(x^l)-\overline{F}\right]^{\frac{1}{2}}-\left[F(x^{l+1})-\overline{F}\right]^{\frac{1}{2}}\geq\frac{1}{2\sqrt{b}}\frac{F(x^l)-F(x^{l+1})}{\sqrt{\mathbb{E}_{i(l)}\left[F(x^l)-F(x^{l+1})\right]}}.
\end{eqnarray}
Take expectation with respect to $i(l)$ for~\eqref{eq:concavity-2}, it follows
\begin{eqnarray}\label{eq:concavity-3}
\left[F(x^l)-\overline{F}\right]^{\frac{1}{2}}-\mathbb{E}_{i(l)}\left[F(x^{l+1})-\overline{F}\right]^{\frac{1}{2}}&\geq&\frac{1}{2\sqrt{b}}\frac{\mathbb{E}_{i(l)}\left[F(x^l)-F(x^{l+1})\right]}{\sqrt{\mathbb{E}_{i(l)}\left[F(x^l)-F(x^{l+1})\right]}}\nonumber\\
&=&\frac{1}{2\sqrt{b}}\sqrt{\mathbb{E}_{i(l)}\left[F(x^l)-F(x^{l+1})\right]},\nonumber\\
\end{eqnarray}
or
\begin{eqnarray}\label{eq:concavity-4}
&&\mathbb{E}_{i(l)}\left[F(x^{l+1})-\overline{F}\right]^{\frac{1}{2}}\nonumber\\
&\leq&\left[F(x^l)-\overline{F}\right]^{\frac{1}{2}}-\frac{1}{2\sqrt{b}}\sqrt{\mathbb{E}_{i(l)}\left[F(x^l)-F(x^{l+1})\right]}\nonumber\\
&=&\left[F(x^l)-\overline{F}\right]^{\frac{1}{2}}-\frac{1}{2\sqrt{b}}\bigg{[}F(x^l)-\frac{1}{N}\sum_{i(l)=1}^{N}F\big{(}T_{i(l),D,\epsilon}(x^{l})\big{)}\bigg{]}^{\frac{1}{2}}.
\end{eqnarray}
Taking expectation with respect to $\xi_{l-1}$,~\eqref{eq:concavity-4} follows that
\begin{eqnarray}\label{eq:concavity-4-0}
&&\mathbb{E}_{\xi_l}\left[F(x^{l+1})-\overline{F}\right]^{\frac{1}{2}}+\mathbb{E}_{\xi_{l-1}}\sum_{\tau=0}^{l}\frac{1}{2\sqrt{b}}\bigg{[}F(x^{\tau})-\frac{1}{N}\sum_{i(\tau)=1}^{N}F\big{(}T_{i(\tau),D,\epsilon}(x^{\tau})\big{)}\bigg{]}^{\frac{1}{2}}\nonumber\\
&\leq&\mathbb{E}_{\xi_{l-1}}\left[F(x^l)-\overline{F}\right]^{\frac{1}{2}}+\mathbb{E}_{\xi_{l-2}}\sum_{\tau=0}^{l-1}\frac{1}{2\sqrt{b}}\bigg{[}F(x^{\tau})-\frac{1}{N}\sum_{i(\tau)=1}^{N}F\big{(}T_{i(\tau),D,\epsilon}(x^{\tau})\big{)}\bigg{]}^{\frac{1}{2}}\nonumber\\
&\leq&\left[F(x^0)-\overline{F}\right]^{\frac{1}{2}}.
\end{eqnarray}
Since $\mathbb{E}_{\xi_l}\left[F(x^{l+1})-\overline{F}\right]^{\frac{1}{2}}\geq0$, it follows that
\begin{eqnarray}
\mathbb{E}_{\xi_{l-1}}\sum_{\tau=0}^{l}\frac{1}{2\sqrt{b}}\bigg{[}F(x^{\tau})-\frac{1}{N}\sum_{i(\tau)=1}^{N}F\big{(}T_{i(\tau),D,\epsilon}(x^{\tau})\big{)}\bigg{]}^{\frac{1}{2}}\leq\left[F(x^0)-\overline{F}\right]^{\frac{1}{2}}.
\end{eqnarray}
By Lemma~\ref{lemma:RS}, we have
\begin{eqnarray}\label{eq:concavity-4-1}
\sum_{\tau=0}^{l}\sqrt{\mathbb{E}_{i(\tau)}\left[F(x^{\tau})-F(x^{\tau+1})\right]}&=&\sum_{\tau=1}^{l}\bigg{[}F(x^{\tau})-\frac{1}{N}\sum_{i(\tau)=1}^{N}F\big{(}T_{i(\tau),D,\epsilon}(x^{\tau})\big{)}\bigg{]}^{\frac{1}{2}}\nonumber\\
&\leq& d,\quad a.s.,
\end{eqnarray}
with $l=0,...,k$.\\
Again combining~\eqref{eq:concavity-2} and (i) in Proposition~\ref{prop:convergence}, we have
\begin{eqnarray}\label{eq:concavity-5}
\left[F(x^l)-\overline{F}\right]^{\frac{1}{2}}-\left[F(x^{l+1})-\overline{F}\right]^{\frac{1}{2}}&\geq&\frac{1}{2\sqrt{b}}\frac{F(x^l)-F(x^{l+1})}{\sqrt{\mathbb{E}_{i(l)}\left[F(x^l)-F(x^{l+1})\right]}}\nonumber\\
&\geq&\frac{a}{2\sqrt{b}}\frac{\|x^l-x^{l+1}\|^2}{\sqrt{\mathbb{E}_{i(l)}\left[F(x^l)-F(x^{l+1})\right]}},
\end{eqnarray}
or
\begin{eqnarray}\label{eq:concavity-6}
&&\|x^l-x^{l+1}\|^2\nonumber\\
&\leq&\frac{2\sqrt{b}}{a}\sqrt{\mathbb{E}_{i(l)}\left[F(x^l)-F(x^{l+1})\right]}\bigg{\{}\left[F(x^l)-\overline{F}\right]^{\frac{1}{2}}-\left[F(x^{l+1})-\overline{F}\right]^{\frac{1}{2}}\bigg{\}}\nonumber\\
\end{eqnarray}
It follows from $2\sqrt{d_1d_2}\leq d_1+d_2$ with nonnegative $d_1$ and $d_2$ that
\begin{eqnarray}\label{eq:concavity-7}
&&2\|x^l-x^{l+1}\|\nonumber\\
&\leq&\frac{\sqrt{b}}{\sqrt{a}}\sqrt{\mathbb{E}_{i(l)}\left[F(x^l)-F(x^{l+1})\right]}+\frac{2}{\sqrt{a}}\bigg{\{}\left[F(x^l)-\overline{F}\right]^{\frac{1}{2}}-\left[F(x^{l+1})-\overline{F}\right]^{\frac{1}{2}}\bigg{\}}\nonumber\\
\end{eqnarray}
Summing~\eqref{eq:concavity-7} for $l=0,...,k$, we obtain
\begin{eqnarray}\label{eq:concavity-8}
\sum_{l=0}^{k}\|x^{l}-x^{l+1}\|\leq\sum\limits_{l=0}^{k}\frac{\sqrt{b}}{2\sqrt{a}}\sqrt{\mathbb{E}_{i(l)}\left[F(x^l)-F(x^{l+1})\right]}+\frac{1}{\sqrt{a}}\left[F(x^0)-\overline{F}\right]^{\frac{1}{2}}.
\end{eqnarray}
By~\eqref{eq:concavity-4-1}, we have
\begin{eqnarray}\label{eq:concavity8-1}
\sum\limits_{l=0}^{k}\|x^{l}-x^{l+1}\|\leq\frac{d\sqrt{b}}{2\sqrt{a}}+\frac{1}{\sqrt{a}}\left[F(x^0)-\overline{F}\right]^{\frac{1}{2}}<+\infty,\quad a.s..
\end{eqnarray}
Combining triangle inequality,~\eqref{eq:concavity8-1} and~\eqref{eq:condition2}, we have
\begin{eqnarray}\label{eq:concavity-9}
\|x^{k+1}-\overline{x}\|&\leq&\|x^{k+1}-x^{k}\|+\|x^{k}-\overline{x}\|\nonumber\\
&\leq&\sum_{l=0}^{k}\|x^{l}-x^{l+1}\|+\|x^{0}-\overline{x}\|\nonumber\\
&\leq&\frac{d\sqrt{b}}{2\sqrt{a}}+\frac{1}{\sqrt{a}}\left[F(x^0)-\overline{F}\right]^{\frac{1}{2}}+\|x^{0}-\overline{x}\|\nonumber\\
&\leq&\frac{\eta}{2}, \quad a.s..
\end{eqnarray}
It follows that $x^{k+1}\in\mathfrak{B}(\overline{x};\frac{\eta}{2},\frac{\nu}{\mathcal{N}})$, $a.s.$.\\
Therefore, $x^k\in\mathfrak{B}(\overline{x};\frac{\eta}{2},\frac{\nu}{\mathcal{N}})$, $\forall k\in\mathbb{N}$ $a.s.$.
\item[{\rm(ii)}] A direct consequence of~\eqref{eq:concavity8-1} is, for all $k$,
\begin{eqnarray}\label{eq:concavity-9-1}
\sum_{k=0}^{+\infty}\|x^k-x^{k+1}\|<+\infty\quad\mbox{a.s.}.
\end{eqnarray}
~\eqref{eq:concavity-9-1} implies that the sequence $\{x^k\}$ converges to a random variable $x$.
\item[{\rm(iii)}] Take expectation respect to $\xi_k$ on both side of statement (ii) of this Proposition, we have that
\begin{eqnarray}\label{eq:concavity-9-2}
\sum_{k=0}^{+\infty}\mathbb{E}_{\xi_k}\|x^k-x^{k+1}\|<+\infty.
\end{eqnarray}
By the convexity of $\|\cdot\|$ and the fact $\mathbb{E}_{\xi_k}x^k=\mathbb{E}_{\xi_{k-1}}x^k$, we have that
\begin{eqnarray}\label{eq:concavity-9-3}
\sum_{k=0}^{+\infty}\|\mathbb{E}_{\xi_{k-1}}x^k-\mathbb{E}_{\xi_k}x^{k+1}\|&=&\sum_{k=0}^{+\infty}\|\mathbb{E}_{\xi_{k}}x^k-\mathbb{E}_{\xi_k}x^{k+1}\|\qquad\mbox{(since $\mathbb{E}_{\xi_k}x^k=\mathbb{E}_{\xi_{k-1}}x^k$)}\nonumber\\
&\leq&\sum_{k=0}^{+\infty}\mathbb{E}_{\xi_{k}}\|x^k-x^{k+1}\|\quad\mbox{(by the convexity of $\|\cdot\|$)}\nonumber\\
&<&+\infty.\qquad\mbox{(by~\eqref{eq:concavity-9-2})}
\end{eqnarray}
Moreover,~\eqref{eq:concavity-9-3} implies that the sequence $\{\mathbb{E}_{\xi_{k-1}}x^k\}$ converges to the point $\mathbb{E}_{\xi}x$.
\end{itemize}
\end{proof}
To further study the linear convergence of VBSCD, we need the following assumption.
\begin{assumption}\label{assump4}
The set $\{x|F(x)\leq\overline{F}\}\cap\mathbb{B}(\overline{x};\eta)$ is convex.
\end{assumption}
Here we note that, local convex and local quasi-convex function $F$ satisfy Assumption~\ref{assump4}. In robust statistics, there are many popular functions which are both quasi-convex and semi-convex, such as SCAD, MCP, etc. (see~\cite{SCAD, quasiconvex, MCP})
\subsection{Linear convergence of VBSCD under LS-EB at critical point}
For given $x^0$ and a realization sequence $\{x^k(\omega)\}$ generated by VBSCD, let $\overline{x}$ be a cluster point of $\{x^k(\omega)\}$. Therefore, there is $k_0>0$ such that $x^{k_0}(\omega)$ satisfy~\eqref{eq:condition1} and~\eqref{eq:condition2}. Let
$\xi_k^{\omega}$ be a filtration generated by the random variable $i(k_0)$,...,$i(k)$, i.e.,
$$\xi_{k}^{\omega}\overset{def}{=}\{i_{\omega}(0),i_{\omega}(1),\ldots,i_{\omega}(k_0-1),i(k_0),\ldots,i(k)\},\quad\xi_{k}^{\omega}\subset\xi_{k+1}^{\omega},$$ where $i_{\omega}(j)$, $j\in\langle0,(k_0-1)\rangle$ is fixed index corresponding to realization $\omega$. And $\xi^{\omega}=(\xi_{k}^{\omega})_{k\in\mathbb{N}}$. By statement (iii) of Proposition~\ref{prop:convergence}, there exists positive number $r(x^0)$, for all $k\in\mathbb{N}$, $\|x^k\|\leq r(x^0)$ $a.s.$. Then we have the linear convergence as follows theorem.
\begin{theorem}[The linear convergence under LS-EB at critical point]\label{theo1} Suppose Assumption~\ref{assump1} and~\ref{assump2} hold. Let Assumption~\ref{assumpH} holds with $\rho>2r(x^0)$. Moreover, we assume that the level-set subdifferential error bound holds at the point $\overline{x}$ with $\eta>0$ and $\nu>0$. Let $b$ be constants given in Proposition~\ref{prop:2}. Considering the sequence of realization $\{x^k(\omega)\}$. Then there exist $k_0>0$, for all $k\geq k_0$ the following assertions are true.
\begin{itemize}
\item[{\rm(i)}] $\{F(x^k)\}$ converges to value $\overline{F}$ at the $Q$-linear rate of convergence by expectation; that is, for $k>k_0$, there are some $\beta=\frac{b-1}{b}\in(0,1)$ such that
\begin{equation}\label{linearrate}
\mathbb{E}_{\xi_{k}^{\omega}}[F(x^{k+1})-\overline{F}]\leq\beta\mathbb{E}_{\xi_{k-1}^{\omega}}[F(x^{k})-\overline{F}].
\end{equation}
As a consequence,
\begin{equation}\label{linearrateB}
\sum_{k=k_0+1}^{\infty}\mathbb{E}_{\xi_{k-1}^{\omega}}[F(x^{k})-\overline{F}]<+\infty.
\end{equation}
\item[{\rm(ii)}] The sequence $\{\mathbb{E}_{\xi_{k-1}^{\omega}}x^k\}$, converges to point $\mathbb{E}_{\xi^{\omega}}x\in\mathbb{B}(\overline{x};\frac{\eta}{2})$ at the $R$-linear rate of convergence; under Assumption~\ref{assump4} we have $F(\mathbb{E}_{\xi^{\omega}}x)=\overline{F}$.
\end{itemize}
\end{theorem}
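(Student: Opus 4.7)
My plan is to split the argument into the two assertions of the theorem and leverage the localization provided by Proposition 4.1 together with the one-step contraction in Proposition 2.2(v).

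\textbf{Setting up localization.} Since $\overline{x}$ is a cluster point of the realization $\{x^k(\omega)\}$ and $F(x^k)$ is monotonically nonincreasing by Proposition 3.1(i), I first argue that for sufficiently large $k_0$ the point $x^{k_0}(\omega)$ satisfies conditions~\eqref{eq:condition1} and~\eqref{eq:condition2}, and moreover Property (A) persists from step $k_0$ onward because $F(x^k)\to\overline{F}$ and all $\hat T_{i,D^k,\epsilon^k}(x^k)$ lie in the descent ball. Hence Proposition 4.1 applies from iteration $k_0$ with $\overline{x}$ and $x^{k_0}$ playing the roles of $\overline{x}$ and $x^0$, giving $x^k\in\mathfrak{B}(\overline{x};\eta/2,\nu/\mathcal{N})$ a.s.\ for every $k\ge k_0$, together with $\sum_k\|x^k-x^{k+1}\|<\infty$ and $\sum_k\|\mathbb{E}_{\xi^{\omega}_{k-1}}x^k-\mathbb{E}_{\xi^{\omega}_k}x^{k+1}\|<\infty$.

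\textbf{Proof of (i).} For each such $k$, since $x^{k+1}=\hat T_{i(k),D^k,\epsilon^k}(x^k)$ and $x^k\in\mathfrak{B}(\overline{x};\eta/2,\nu/\mathcal{N})$, Proposition 2.2(v) applied pointwise yields $\mathbb{E}_{i(k)}F(x^{k+1})-\overline{F}\le\frac{b-1}{b}[F(x^k)-\overline{F}]$. Taking outer expectation over $\xi^{\omega}_{k-1}$ and using the tower property gives the Q-linear bound~\eqref{linearrate} with $\beta=(b-1)/b\in(0,1)$ (noting that $\rho>2r(x^0)$ ensures, via Assumption~\ref{assumpH}, that $\overline{F}$ is the unique critical value accumulating in the ball so that $b$ is the correct global constant). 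Iterating~\eqref{linearrate} shows $\mathbb{E}_{\xi^{\omega}_{k-1}}[F(x^k)-\overline{F}]\le\beta^{k-k_0}\mathbb{E}_{\xi^{\omega}_{k_0-1}}[F(x^{k_0})-\overline{F}]$, so the geometric series bound gives~\eqref{linearrateB}.

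\textbf{Proof of (ii).} For $R$-linear convergence I would estimate the tail $\|\mathbb{E}_{\xi^{\omega}_{k-1}}x^k-\mathbb{E}_{\xi^{\omega}}x\|\le\sum_{l=k}^{\infty}\|\mathbb{E}_{\xi^{\omega}_{l-1}}x^l-\mathbb{E}_{\xi^{\omega}_l}x^{l+1}\|$. By Jensen's inequality (convexity of $\|\cdot\|$ and $(\cdot)^2$), each summand is bounded by $\sqrt{\mathbb{E}_{\xi^{\omega}_l}\|x^l-x^{l+1}\|^2}$; invoking Proposition 3.1(i), this is at most $\sqrt{\tfrac1a\,\mathbb{E}_{\xi^{\omega}_l}[F(x^l)-F(x^{l+1})]}\le\sqrt{\tfrac1a\,\mathbb{E}_{\xi^{\omega}_{l-1}}[F(x^l)-\overline{F}]}$, which by (i) decays like $\beta^{(l-k_0)/2}$. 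Summing the geometric series gives $\|\mathbb{E}_{\xi^{\omega}_{k-1}}x^k-\mathbb{E}_{\xi^{\omega}}x\|\le C\,\beta^{(k-k_0)/2}$, the claimed $R$-linear rate. That $\mathbb{E}_{\xi^{\omega}}x\in\mathbb{B}(\overline{x};\eta/2)$ follows from Proposition 4.1(i) and the convexity of the ball.

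\textbf{Identifying the limit value.} For $F(\mathbb{E}_{\xi^{\omega}}x)=\overline{F}$ I would combine three facts: (a)~$x^k\to x$ a.s.\ (Proposition 4.1(ii)); (b)~$F(x^k)\to\overline{F}$ a.s., obtained by Proposition 3.1(ii) (giving $F(x^k)\to F_\zeta$ a.s.) together with~\eqref{linearrateB} in expectation and Property (A) which forces $F_\zeta\ge\overline{F}$, so $F_\zeta=\overline{F}$ a.s.; and (c)~continuity of $F$ yielding $F(x)=\overline{F}$ a.s. Therefore $x\in\{F\le\overline{F}\}\cap\mathbb{B}(\overline{x};\eta)$ almost surely, a convex set by Assumption~\ref{assump4}, so $\mathbb{E}_{\xi^{\omega}}x$ lies in this set and hence $F(\mathbb{E}_{\xi^{\omega}}x)\le\overline{F}$; the reverse inequality is obtained by passing to the limit in the a.s.\ bound $F(\mathbb{E}_{\xi^{\omega}_{k-1}}x^k)\ge\overline{F}$ (which holds because each $\mathbb{E}_{\xi^{\omega}_{k-1}}x^k$ is a convex combination of iterates on the convex sublevel set $\{F=\overline{F}\}\cap\mathbb{B}(\overline{x};\eta)$ in the limit) using continuity of $F$. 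I expect the main obstacle to be this final identification: carefully justifying the interchange of expectation and the sublevel set containment requires measurability of $x$ and a uniform integrability argument to pass from $L^1$-convergence of $F(x^k)$ to the pointwise value $F(\mathbb{E}_{\xi^{\omega}}x)$.
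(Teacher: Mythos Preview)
Your overall route matches the paper's: localize via Proposition~\ref{prop:3.1}, apply Proposition~\ref{prop:2}(v) for the one-step contraction, take expectations for (i), and then combine the descent inequality of Proposition~\ref{prop:convergence}(i) with the geometric decay from (i) to get the $R$-linear rate in (ii). The computations you sketch for (i) and (ii) are essentially those of the paper.

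There is, however, a genuine gap in your localization step, namely your justification of Property~(A). You write that Property~(A) ``persists from step $k_0$ onward because $F(x^k)\to\overline{F}$ and all $\hat T_{i,D^k,\epsilon^k}(x^k)$ lie in the descent ball.'' This does not work: knowing $F(x^k)\to\overline{F}$ along the fixed realization $\omega$ says nothing about $F(\hat T_{i,D^k,\epsilon^k}(x^k))$ for indices $i\neq i(k)(\omega)$, and Property~(A) requires $F(\hat T_{i,D^k,\epsilon^k}(x^k))\ge\overline{F}$ for \emph{every} $i\in\{1,\dots,N\}$. The paper's argument is different and uses Assumption~\ref{assumpH} with $\rho>2r(x^0)$ at precisely this point: since every iterate (along any realization) lies in the ball of radius $r(x^0)$, any two accumulation points are at distance at most $2r(x^0)<\rho$, so by Assumption~\ref{assumpH} they share the same value $\overline{F}$; monotonicity of $F(x^k)$ then gives $F(x^k)\ge\overline{F}$ for all $k$ and all realizations, which is exactly Property~(A). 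You invoke Assumption~\ref{assumpH} later, but only to ``justify that $b$ is the correct global constant,'' which is not its role; $b$ comes from the LS-EB constants in Proposition~\ref{prop:2}, not from Assumption~\ref{assumpH}. Move the use of Assumption~\ref{assumpH} to the localization step and use it to establish Property~(A) directly; once that is done, your sketches of (i), (ii), and the limit-value identification under Assumption~\ref{assump4} line up with the paper's proof.
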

\begin{proof}
Let $\Omega$ be the set of accumulation points for the realization of $\{x^k\}$. Since Assumption~\ref{assumpH} holds with $\rho>2r(x^0)$, we have that $F(x)=\overline{F}$, $\forall x\in\Omega$. Moreover, $F(x^k)\geq\overline{F}$, $\forall k\in\mathbb{N}$. Then condition~\eqref{eq:condition3} holds.\\
Together with $x^{k_0}(\omega)$ satisfy~\eqref{eq:condition1} and~\eqref{eq:condition2}, by Proposition~\ref{prop:3.1}, for all $k> k_0$, $x^{k}\in\mathfrak{B}(\overline{x};\frac{\eta}{2},\frac{\nu}{\mathcal{N}})$ $a.s.$.
\begin{itemize}
\item[(i)] For all $k> k_0$, by (v) of Proposition~\ref{prop:2} it follows that
\begin{eqnarray}\label{Q-linear}
\mathbb{E}_{i(k)}F(x^{k+1})-\overline{F}=\beta\left(F(x^k)-\overline{F}\right),
\end{eqnarray}
where $\beta=\frac{b-1}{b}\in(0,1)$. Again using the fact $x^k\in\mathfrak{B}(\overline{x};\frac{\eta}{2},\frac{\nu}{\mathcal{N}})$ $a.s.$, we take the expectation with respect to $\xi_{k-1}^{\omega}$ for inequality~\eqref{Q-linear}, we obtain that
\begin{eqnarray}\label{Q-linear-2}
\mathbb{E}_{\xi_{k}^{\omega}}[F(x^{k+1})-\overline{F}]=\beta\mathbb{E}_{\xi_{k-1}^{\omega}}[F(x^k)-\overline{F}].
\end{eqnarray}
\item[(ii)] We now derive the R-linear rate of convergence of $\{\mathbb{E}_{\xi_{k-1}^{\omega}}x^k\}$. Taking expectation with respect to $\xi_{k}^{\omega}$ for (i) in Proposition~\ref{prop:convergence}, we have
\begin{eqnarray}
\mathbb{E}_{\xi_{k-1}^{\omega}}F(x^k)-\mathbb{E}_{\xi_{k}^{\omega}}F(x^{k+1})\geq a\mathbb{E}_{\xi_{k}^{\omega}}\|x^k-x^{k+1}\|^2.
\end{eqnarray}
Thus
\begin{eqnarray*}
\mathbb{E}_{\xi_{k}^{\omega}}\|x^k-x^{k+1}\|^2&\leq&\frac{1}{a}\bigg{[}\mathbb{E}_{\xi_{k-1}^{\omega}}\big{(}F(x^k)-\overline{F}\big{)}-\mathbb{E}_{\xi_{k}^{\omega}}\big{(}F(x^{k+1})-\overline{F}\big{)}\bigg{]}\\
                 &\leq&\frac{1}{a}\mathbb{E}_{\xi_{k-1}^{\omega}}[F(x^k)-\overline{F}]\\
                 &\leq&\frac{\beta^{k-k_0}}{a}[F\big{(}x^{k_0}(\omega)\big{)}-\overline{F}]\qquad\qquad\qquad\mbox{ (by~\eqref{Q-linear-2})}.
\end{eqnarray*}
From the convexity of $\|\cdot\|^2$ and above inequality, we see that
$$\|\mathbb{E}_{\xi_{k}^{\omega}}(x^k-x^{k+1})\|\leq\sqrt{\mathbb{E}_{\xi_{k}^{\omega}}\|x^k-x^{k+1}\|^2}\leq\hat{M}(\sqrt{\beta})^{k-k_0},$$
where $\hat{M}=\sqrt{\frac{F(x^{k_0}(\omega))-\overline{F}}{a}}$.
By statement (iii) of Proposition~\ref{prop:3.1}, we have $\sum\limits_{k=k_0}^{+\infty}\|\mathbb{E}_{\xi_{k-1}^{\omega}}x^k-\mathbb{E}_{\xi_{k}^{\omega}}x^{k+1}\|<+\infty$ and $\mathbb{E}_{\xi_{k-1}^{\omega}}x^k$ converges to the point $\mathbb{E}_{\xi^{\omega}}x$. Hence,
$$\|\mathbb{E}_{\xi_{k-1}^{\omega}}x^k-\mathbb{E}_{\xi^{\omega}}x\|\leq\sum_{l=k}^{\infty}\|\mathbb{E}_{\xi_{l-1}^{\omega}}x^l-\mathbb{E}_{\xi_{l}^{\omega}}x^{l+1}\|\leq\frac{\hat{M}}{1-\sqrt{\beta}}(\sqrt{\beta})^{k-k_0}.$$ This
shows that $\{\mathbb{E}_{\xi_{k-1}^{\omega}}x^k\}$ converges to $\mathbb{E}_{\xi^{\omega}}x$ at the R-linear rate; that is, $$\limsup_{k\rightarrow\infty}\sqrt[k]{\|\mathbb{E}_{\xi_{k-1}^{\omega}}x^k-\mathbb{E}_{\xi^{\omega}}x\|}=\sqrt{\beta}<1.$$
Since $x^k\in\mathfrak{B}(\overline{x};\frac{\eta}{2},\frac{\nu}{\mathcal{N}})$ $a.s.$, then $x\in\mathbb{B}(\overline{x};\frac{\eta}{2})$ and $\mathbb{E}_{\xi^{\omega}}x\in\mathbb{B}(\overline{x};\frac{\eta}{2})$. Let $\mathbb{W}$ be the set realization of $x$. For $F\left(x(\omega')\right)=\overline{F}$, $\omega'\in\mathbb{W}$, under Assumption~\ref{assump4}, we have $F\left(\mathbb{E}_{\xi^{\omega}}x\right)\leq\overline{F}$. Since $x^k\rightarrow x$ $a.s.$ and $x^k\in\mathfrak{B}(\overline{x};\frac{\eta}{2},\frac{\nu}{\mathcal{N}})$, we have $F\left(\mathbb{E}_{\xi^{\omega}}x\right)\geq\overline{F}$. Then it follows that $F\left(\mathbb{E}_{\xi^{\omega}}x\right)=\overline{F}$.
\end{itemize}
\end{proof}
\subsection{Linear convergence to a local minima}\label{local minima}
Thorough this subsection, let $\overline{x}$ be a local minimum on $\mathbb{B}(\overline{x};\eta)$. If the level-set subdifferential error bound holds at point $\overline{x}$ with $\eta>0$ and $\nu>0$, we will show that, under Assumption~\ref{assump3} and special selection of the initial point, sequence $\{x^k\}$ almost surely belongs to $\mathfrak{B}(\overline{x},\frac{\eta}{2},\frac{\nu}{\mathcal{N}})$ and the linear convergence to local minima of VBSCD
\begin{theorem}[The linear convergence to a local minima]\label{theo2} Suppose Assumption~\ref{assump1} and~\ref{assump2} hold. Moreover, we assume that the level-set subdifferential error bound holds at the point $\overline{x}$ with $\eta>0$ and $\nu>0$. Let $a$, $b$ be constant given in Proposition~\ref{prop:2}, Assumption~\ref{assump3} holds with $\sigma=\frac{\eta}{2}$, $\rho=\eta$, $\delta=\frac{\nu}{\mathcal{N}}$ and $\mathfrak{a}\leq a$. Let $x^0$ satisfy~\eqref{eq:condition1} and~\eqref{eq:condition2} and the sequence $\{x^k\}$ be generated by the VBSCD method. Then the following assertions are true.
\begin{itemize}
\item[{\rm(i)}] $\{F(x^k)\}$ converges to value $\overline{F}$ at the $Q$-linear rate of convergence by expectation; that is, there are some $\beta=\frac{b-1}{b}\in(0,1)$ such that
\begin{equation}\label{linearrate}
\mathbb{E}_{\xi_{k-1}}F(x^{k})-\overline{F}\leq\beta^k\bigg{(}F\big{(}x^{0}\big{)}-\overline{F}\bigg{)}.
\end{equation}
As a consequence,
\begin{equation}\label{linearrateB}
\sum_{k=0}^{\infty}\mathbb{E}_{\xi_{k-1}}[F(x^{k})-\overline{F}]<+\infty.
\end{equation}
\item[{\rm(ii)}] The sequence $\{\mathbb{E}_{\xi_{k-1}}x^k\}$, converges to point $\mathbb{E}_{\xi}x\in\mathbb{B}(\overline{x};\frac{\eta}{2})$ at the $R$-linear rate of convergence; under Assumption~\ref{assump4}, we have $F(\mathbb{E}_{\xi}x)=\overline{F}$ and $\mathbb{E}_{\xi}x$ is a local minimum.
\end{itemize}
\end{theorem}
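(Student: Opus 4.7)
The plan is to mirror the structure of Theorem~\ref{theo1} closely, with the key adjustment being the verification of condition~\eqref{eq:condition3} (Property A) in Proposition~\ref{prop:3.1}. In the critical-point setting that condition was handed to us by Assumption~\ref{assumpH}; here I will derive it from the local minimality of $\overline{x}$ on $\mathbb{B}(\overline{x};\eta)$ together with the growth condition in Assumption~\ref{assump3}. Once Property A is secured, Proposition~\ref{prop:3.1} delivers the almost sure containment $x^k\in\mathfrak{B}(\overline{x};\eta/2,\nu/\mathcal{N})$ and the finite-length conclusions, after which Proposition~\ref{prop:2}(v) and the Theorem~\ref{theo1}(ii) argument produce the two convergence rates almost verbatim.

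The verification of Property A is inductive and interleaved with the induction of Proposition~\ref{prop:3.1}(i). Assuming $x^k\in\mathfrak{B}(\overline{x};\eta/2,\nu/\mathcal{N})$, set $y:=\hat{T}_{i,D^k,\epsilon^k}(x^k)$ and suppose for contradiction that $y\notin\mathbb{B}(\overline{x};\eta)$. Assumption~\ref{assump3} with $(\sigma,\rho,\delta,\mathfrak{a})=(\eta/2,\eta,\nu/\mathcal{N},\mathfrak{a})$ forces
$F(x^k)-F(y)<\mathfrak{a}\|y-x^k\|^2\leq a\|y-x^k\|^2$,
whereas Proposition~\ref{prop:Ek}(i) gives the reverse bound $F(x^k)-F(y)\geq a\|x^k-y\|^2$. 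These together compel $y=x^k\in\mathbb{B}(\overline{x};\eta/2)\subset\mathbb{B}(\overline{x};\eta)$, a contradiction. Hence $y\in\mathbb{B}(\overline{x};\eta)$, and local minimality of $\overline{x}$ there yields $F(y)\geq\overline{F}$, i.e.\ Property A holds at step $k$. Proposition~\ref{prop:3.1} then supplies $x^{k+1}\in\mathfrak{B}(\overline{x};\eta/2,\nu/\mathcal{N})$ almost surely (closing the induction) along with the finite-length property of $\{x^k\}$ and of $\{\mathbb{E}_{\xi_{k-1}}x^k\}$, and the convergence of the latter to a limit $\mathbb{E}_{\xi}x\in\mathbb{B}(\overline{x};\eta/2)$.

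Statement (i) then follows from Proposition~\ref{prop:2}(v) applied at each $x^k$, which gives $\mathbb{E}_{i(k)}F(x^{k+1})-\overline{F}\leq\beta\,(F(x^k)-\overline{F})$ with $\beta=(b-1)/b\in(0,1)$; taking conditional expectation with respect to $\xi_{k-1}$ and iterating delivers the geometric bound, and summability is immediate. For statement (ii), I would follow the pattern of Theorem~\ref{theo1}(ii): combine Proposition~\ref{prop:Ek}(i) with the Q-linear bound of (i) to obtain $\mathbb{E}_{\xi_k}\|x^k-x^{k+1}\|^2\leq\beta^{k}(F(x^0)-\overline{F})/a$, apply Jensen's inequality to control $\|\mathbb{E}_{\xi_{k-1}}x^k-\mathbb{E}_{\xi_k}x^{k+1}\|$ by $\hat{M}(\sqrt{\beta})^{k}$ with $\hat{M}=\sqrt{(F(x^0)-\overline{F})/a}$, and telescope via Proposition~\ref{prop:3.1}(iii) to obtain the R-linear estimate $\|\mathbb{E}_{\xi_{k-1}}x^k-\mathbb{E}_{\xi}x\|\leq\hat{M}(\sqrt{\beta})^{k}/(1-\sqrt{\beta})$.

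It remains to show $F(\mathbb{E}_{\xi}x)=\overline{F}$ and that $\mathbb{E}_{\xi}x$ is a local minimum. Since $x^k\in\mathbb{B}(\overline{x};\eta/2)\subset\mathbb{B}(\overline{x};\eta)$ and $\overline{x}$ is a local minimum there, $F(x^k)\geq\overline{F}$; continuity of $F$ and $x^k\to x$ a.s.\ imply $F(x)\geq\overline{F}$ a.s. The Q-linear bound of (i) forces $\mathbb{E}[F(x^k)-\overline{F}]\to 0$, and Fatou's lemma applied to the nonnegative random variables $F(x^k)-\overline{F}$ yields $\mathbb{E}[F(x)-\overline{F}]\leq 0$, so $F(x)=\overline{F}$ a.s. Each realization of $x$ then lies in the closed convex set $\{F\leq\overline{F}\}\cap\mathbb{B}(\overline{x};\eta)$ (Assumption~\ref{assump4}), hence so does $\mathbb{E}_{\xi}x$, giving $F(\mathbb{E}_{\xi}x)\leq\overline{F}$; the reverse inequality follows from $\mathbb{E}_{\xi}x\in\mathbb{B}(\overline{x};\eta/2)$ and the local-minimum property of $\overline{x}$, so equality holds and $\mathbb{E}_{\xi}x$ itself is a local minimum. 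The main obstacle I anticipate is the contradiction step verifying Property A, because one must carefully line up the parameters $(\sigma,\rho,\delta,\mathfrak{a})$ of Assumption~\ref{assump3} with the radii defining $\mathfrak{B}(\overline{x};\eta/2,\nu/\mathcal{N})$ so that the argument closes uniformly in $k$; the remaining steps are transcriptions of the critical-point proof.
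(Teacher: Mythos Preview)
Your proposal is correct and follows essentially the same route as the paper: verify Property~A via the growth condition (Assumption~\ref{assump3}) combined with the descent inequality of Proposition~\ref{prop:Ek}(i), then invoke Proposition~\ref{prop:3.1} and replay the Theorem~\ref{theo1} arguments. Your treatment is actually more detailed than the paper's---in particular the Fatou-based justification that $F(x)=\overline F$ a.s.\ is more explicit---and the only cosmetic slip is in the contradiction step, where the two inequalities $a\|x^k-y\|^2\le F(x^k)-F(y)<\mathfrak{a}\|y-x^k\|^2\le a\|y-x^k\|^2$ are already directly contradictory without needing to pass through ``$y=x^k$''.
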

\begin{proof}
The proof of this theorem is similar to the proof of Theorem~\ref{theo1}. The difference of proof is the following:
\begin{itemize}
\item[(1)] Together Assumption 4 (Growth condition) holds with $\sigma=\frac{\eta}{2}$, $\rho=\eta$, $\delta=\frac{\nu}{\mathcal{N}}$ and $\mathfrak{a}\leq a$ and statement (i) in Proposition 3.1, we have that $\forall k\in\mathbb{N}$, $x^k\in\mathbb{B}(\overline{x},\frac{\eta}{2})$ implies $x^{k+1}\in\mathbb{B}(\overline{x},\eta)$. Since $\overline{x}$ is local minimum on $\mathbb{B}(\overline{x},\eta)$, then $F(x^{k+1})\geq\overline{F}$ and condition~\eqref{eq:condition3} holds. Since $x^0$ satisfy~\eqref{eq:condition1} and~\eqref{eq:condition2}, by Proposition~\ref{prop:3.1}, we have that $\forall k\in\mathbb{N}$, $x^{k}\in\mathfrak{B}(\overline{x};\frac{\eta}{2},\frac{\nu}{\mathcal{N}})$ $a.s.$.
\item[(2)] Under Assumption~\ref{assump4}, we have $F\left(\mathbb{E}_{\xi}x\right)=\overline{F}$ and $\mathbb{E}_{\xi}x\in\mathbb{B}(\overline{x};\eta)$. Since $\overline{x}$ is a local minimum on $\mathbb{B}(\overline{x};\eta)$, we have that $\mathbb{E}_{\xi}x$ is also a local minimum on $\mathbb{B}(\overline{x};\eta)$.
\end{itemize}
\end{proof}
%%%%%%%%%%%%
%%%%%%%%%%KER PROP FOR CONV RATE
%%%%%%%%%%%%%%%%%%%%%%%
\subsection{Linear convergence to a global minima}
Thorough this subsection, let $\overline{x}$ be a global minimum. If the level-set subdifferential error bound holds at point $\overline{x}$ with $\eta>0$ and $\nu>0$, we will show the linear convergence of VBSCD.
\begin{theorem}[The linear convergence to a global minima]\label{theo3} Suppose Assumption~\ref{assump1} and~\ref{assump2} hold. Moreover, we assume that the level-set subdifferential error bound holds at the point $\overline{x}$ with $\eta>0$ and $\nu>0$. Let $b$ and $\mathcal{N}$ be constant given in Proposition~\ref{prop:2}. There exist $\sigma\in(0,\frac{\eta}{2})$ such that the inequalities
\begin{equation}
\|x^0-\overline{x}\|<\sigma,\quad\overline{F}<F(x^0)<\overline{F}+\frac{\nu}{\mathcal{N}}
\end{equation}
implies that any realization of the sequence $\{x^k\}$ generated by VBSCD satisfies
\begin{itemize}
\item[{\rm(i)}] $x^k\in\mathfrak{B}(\overline{x};\frac{\eta}{2},\frac{\nu}{\mathcal{N}})$,
\item[{\rm(ii)}] $\{F(x^k)\}$ converges to value $\overline{F}$ at the $Q$-linear rate of convergence by expectation; that is, there are some $\beta=\frac{b-1}{b}\in(0,1)$ such that
\begin{equation}
\mathbb{E}_{\xi_{k-1}}F(x^{k})-\overline{F}\leq\beta^k\bigg{(}F\big{(}x^{0}\big{)}-\overline{F}\bigg{)}.
\end{equation}
As a consequence,
\begin{equation}
\sum_{k=0}^{\infty}\mathbb{E}_{\xi_{k-1}}[F(x^{k})-\overline{F}]<+\infty.
\end{equation}
\item[{\rm(iii)}] The sequence $\{\mathbb{E}_{\xi_{k-1}}x^k\}$, converges to point $\mathbb{E}_{\xi}x\in\mathbb{B}(\overline{x};\frac{\eta}{2})$ at the $R$-linear rate of convergence; under Assumption~\ref{assump4}, we have $F(\mathbb{E}_{\xi}x)=\overline{F}$ and $\mathbb{E}_{\xi}x$ is a global minimum.
\end{itemize}
\end{theorem}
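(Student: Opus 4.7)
The plan is to mirror the arguments of Theorem~\ref{theo1} and Theorem~\ref{theo2}, but to observe that the global-minimum hypothesis on $\overline{x}$ supplies condition~\eqref{eq:condition3} for free and so replaces both Assumption~\ref{assumpH} (used in Theorem~\ref{theo1}) and Assumption~\ref{assump3} (used in Theorem~\ref{theo2}). Concretely, since $\overline{x}$ is a global minimum, $F(y)\geq\overline{F}$ for every $y\in\RR^n$; hence for every $k$ and every $i\in\{1,\ldots,N\}$ one has $F\bigl(\hat{T}_{i,D^k,\epsilon^k}(x^k)\bigr)\geq\overline{F}$, which is exactly Property A at every iterate.

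First I would select $\sigma\in(0,\eta/2)$ small enough that, whenever $\|x^0-\overline{x}\|<\sigma$ and $\overline{F}<F(x^0)<\overline{F}+\nu/\mathcal{N}$, the right-hand inequality of~\eqref{eq:condition2} holds, namely
$$\sigma + \frac{d\sqrt{b}}{2\sqrt{a}} + \frac{1}{\sqrt{a}}\bigl[F(x^0)-\overline{F}\bigr]^{1/2} < \frac{\eta}{2}.$$
This is feasible because $a,b,\mathcal{N}$ come from Propositions~\ref{prop:Ek} and~\ref{prop:2} and the constant $d$ from Lemma~\ref{lemma:RS} can be bounded in terms of $F(x^0)-\overline{F}$. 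With~\eqref{eq:condition1}--\eqref{eq:condition3} now in force, Proposition~\ref{prop:3.1} yields statement~(i), i.e. $x^k\in\mathfrak{B}(\overline{x};\eta/2,\nu/\mathcal{N})$ almost surely, together with the almost-sure finite-length properties $\sum_k\|x^k-x^{k+1}\|<\infty$ and $\sum_k\|\mathbb{E}_{\xi_{k-1}}x^k-\mathbb{E}_{\xi_k}x^{k+1}\|<\infty$ that underlie the R-linear step.

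For statement~(ii), since all iterates lie in $\mathfrak{B}(\overline{x};\eta/2,\nu/\mathcal{N})$ a.s., Proposition~\ref{prop:2}(v) gives $\mathbb{E}_{i(k)}F(x^{k+1})-\overline{F}\leq\beta\bigl(F(x^k)-\overline{F}\bigr)$ with $\beta=(b-1)/b\in(0,1)$. Taking expectation with respect to $\xi_{k-1}$ and iterating in $k$ yields the Q-linear bound $\mathbb{E}_{\xi_{k-1}}F(x^k)-\overline{F}\leq\beta^k\bigl(F(x^0)-\overline{F}\bigr)$, and summability of $\{\mathbb{E}_{\xi_{k-1}}[F(x^k)-\overline{F}]\}$ follows from the geometric series.

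For statement~(iii), I would reuse the final argument in the proof of Theorem~\ref{theo1}: combining Proposition~\ref{prop:convergence}(i) with the Q-linear rate yields $\mathbb{E}_{\xi_k}\|x^k-x^{k+1}\|^2\leq a^{-1}\beta^k\bigl(F(x^0)-\overline{F}\bigr)$, then Jensen (convexity of $\|\cdot\|^2$) produces $\|\mathbb{E}_{\xi_{k-1}}x^k-\mathbb{E}_{\xi_k}x^{k+1}\|\leq\hat{M}(\sqrt{\beta})^k$, and telescoping gives R-linear convergence of $\{\mathbb{E}_{\xi_{k-1}}x^k\}$ to some $\mathbb{E}_\xi x\in\mathbb{B}(\overline{x};\eta/2)$. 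Under Assumption~\ref{assump4}, the intersection $[F\leq\overline{F}]\cap\mathbb{B}(\overline{x};\eta)$ is convex; every realization of the limit random variable $x$ satisfies $F(x)=\overline{F}$ (upper bound from $\lim F(x^k)=\overline{F}$, lower bound from global minimality), so convexity of the level set forces $F(\mathbb{E}_\xi x)\leq\overline{F}$, which combined with global minimality gives equality and makes $\mathbb{E}_\xi x$ a global minimum. The only bookkeeping I expect to require some care is ensuring that $d$ in Lemma~\ref{lemma:RS} is controlled uniformly in the initial data so that $\sigma$ can actually be chosen, but this is precisely the step already carried out in the two previous theorems.
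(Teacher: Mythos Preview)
Your proposal is correct and takes essentially the same approach as the paper, which simply states that the result is ``a straightforward variant of Theorem~\ref{theo1} and~\ref{theo2}.'' In fact you have supplied considerably more detail than the paper itself: you correctly identify that global minimality of $\overline{x}$ makes Property~A automatic (so neither Assumption~\ref{assumpH} nor Assumption~\ref{assump3} is needed), and the remaining steps---choosing $\sigma$ so that \eqref{eq:condition1}--\eqref{eq:condition2} hold, invoking Proposition~\ref{prop:3.1} for (i), Proposition~\ref{prop:2}(v) for (ii), and the R-linear argument of Theorem~\ref{theo1} for (iii)---are exactly the intended route.
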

\begin{proof}
It is a straightforward variant of Theorem~\ref{theo1} and~\ref{theo2}.
\end{proof}
%%%%%%%%%%%%%%%%%%%
%\begin{figure}
%\begin{center}
%\centering
%\includegraphics[width=4.2in]{Fig.png}
%\caption{Figure}
%\label{Fig1}
%\end{center}
%\end{figure}
%%%%%%%%%%%%%%%%%%%%
%%%%%%%%%%%%GAP FUNCTION CONVERGENCE RATE%%%%%%%%%%
%%%%%%%%%%%%%%%%%%%%%%%%%%%%%%%%%%%%%%%
%%%%1-12-2019%%%%%%%%%%%%%%%%%%%%%%%%%%%%
%%%%%%%%%%%%%%%%%%%
%%%%%%%%%%%%%%%%%%%
%%%%%REFERENCES%%%%%%%%%
%%%%%%%%%%%%%%%%%%%%%

\end{document}